\journal{European Journal of Operational Research}
\def\BState{\State\hskip-\ALG@thistlm}
\newtheorem{theorem}{Theorem}
\newtheorem{ThmDef}{Definition}
\newtheorem{ThmLem}{Lemma}
\newlength\Colsep
\newcolumntype{Y}{>{\centering\arraybackslash}X}
\newcolumntype{Z}{>{\raggedleft\arraybackslash}X}
\begin{document}
	\begin{frontmatter}
		\title{Disaggregated Benders decomposition and lazy constraints for solving the budget-constrained dynamic uncapacitated facility location and network design problem}
		\author{Robin H. Pearce \& Michael Forbes}
		\address{School of Mathematics and Physics, University of Queensland, Australia}
		
		\begin{abstract}
			We present an approach for solving to optimality the budget-constrained Dynamic Uncapacitated Facility Location and Network Design problem (DUFLNDP). This is a problem where a network must be constructed or expanded and facilities placed in the network, subject to a budget, in order to satisfy a number of demands. With the demands satisfied, the objective is to minimise the running cost of the network and the cost of moving demands to facilities. The problem can be disaggregated over two different sets simultaneously, leading to many smaller models which can be solved more easily. Using disaggregated Benders decomposition and lazy constraints, we solve many instances to optimality that have not previously been solved. We use an analytic procedure to generate Benders optimality cuts which are provably Pareto-optimal.
		\end{abstract}
		
		\begin{keyword}
			Network design\sep Benders decomposition\sep Lazy constraints\sep Pareto-optimality
		\end{keyword}
	\end{frontmatter}
	
	\section{Introduction}
	
	%Literature review, introduction.
	
	In this paper we apply Benders decomposition to a facility location and network design problem, specifically looking at a number of ways of improving convergence of the algorithm. In particular, we disaggregate the Benders sub-problems, use an alternative to the standard Benders feasibility cuts and analytically construct Benders optimality cuts. We also prove the Pareto-optimality of the analytic Benders cuts and discuss the importance of using Pareto-optimal cuts.
	
	Facility location problems are important in many areas of both industry and government. From deciding the location of stores and warehouses, to important services such as police, fire and health, facility location problems can have a large impact on a population. Equally important are network design problems, such as vehicle routing, or utility network optimisation. While these problems have individually been extensively researched and expanded, the combination of facility location and network design has received less attention.
	
	The facility location problem dates back to the start of the 20th century \citep{Weber1909}, and is the basis of many more detailed problems. Benders decomposition \citep{Benders1962} is an ideal technique for solving facility location problems, particularly the uncapacitated facility location (UFL) problem \citep{Fischetti2016}. Geoffrion and Graves \cite{Geoffrion1974} apply Benders Decomposition to a multicommodity variant of the facility location problem to great effect. Magnanti and Wong \cite{Magnanti1981} explore regular and disaggregated Benders Decomposition, apply it to the UFL problem, and propose an interior point method for accelerating convergence of the algorithm. 
	
	More recently, an efficient implementation of Benders Decomposition for the UFL is demonstrated by Fischetti, Ljubic and Sinnl (2016)\cite{Fischetti2016}. They apply disaggregated Benders decomposition with a number of additional features which are useful, particularly for the UFL. Tang, Jiang and Saharidis \cite{Tang2013} use disaggregated Benders decomposition to solve a capacitated facility location problem where the capacities could be modified for a cost. They also consider adding extra constraints to enforce feasibility and tighten the lower bound on the objective value, which are important in the application of Benders Decomposition.
	
	Network flow and design problems have also been a major area of study over the last century. Today, many efficient methods for finding the maximum flow through a network exist \cite{Ford1956,Elias1956}. As such, more recent studies tend to focus on network design problems, where the network itself is optimised to achieve some goal, such as maximising the throughput of the network over time. Many of these problems are excellent candidates for Benders decomposition. Nurre, Cavdaroglu and Wallace \cite{Nurre2012} consider a problem where a utilities network has been partially destroyed, and the reconstruction must be scheduled to maximise total throughput of the network over time. Boland et. al. \cite{Boland2013} find the optimal maintenance schedule of a network, also to maximise throughput.
	
	Another goal may be to minimise the running cost of a network, which is often coupled with a facility- or hub-location aspect. One example of this is the Hub Line Location problem, considered by de S\'{a} et. al. \cite{deSa2015}, where hub facilities must be built in a public transit network and connected in a line. The objective is to minimise the weighted travel time of all demands through the network. Another example is the Uncapacitated Multiple Allocation Hub Location problem considered by Camargo, Miranda Jr. and Luna \cite{Camargo2008}, where hubs must be built so demands can be routed between locations via hubs. de S\'{a}, de Camargo and de Miranda \cite{deSa2013} apply Benders decomposition to another hub location problem, with a number of improvements such as a ``warm start'', disaggregation of the sub-problems and modified feasibility cuts. Both studies apply Benders decomposition to their problems.
	
	We are considering the budget-constrained Dynamic Uncapacitated Facility Location and Network Design Problem (DUFLNDP) presented by Ghaderi and Jabalameli \cite{Ghaderi2013}. The government sets a fixed budget every year for the construction of new health clinics and roads, and one must work within that budget to minimise the running cost of the network while satisfying all demand for health services by routing demand through the network to health clinics.
	
	The remainder of this paper is structured as follows: Section \ref{SecModel} contains our reformulation of the DUFLNDP, which is the base model to which we apply disaggregated Benders decomposition in Section \ref{SecDBD}. This section also covers many details around the implementation of disaggregated Benders decomposition such as Pareto-optimality of Benders optimality cuts and feasibility of sub-problems. In Section \ref{SecWarm} we describe the use of a warm start with Benders decomposition to improve the initial LP-bound. Our computational results are in Section \ref{SecResults}, followed by a discussion of disaggregation in Section \ref{SecDiscussion}, before concluding with Section \ref{SecConclusion}.

	\section{Model Formulation}\label{SecModel}
	% Explain the problem in more detail
	
	Ghaderi and Jabalameli \cite{Ghaderi2013} introduce the budget-constrained Dynamic Uncapacitated Facility Location and Network Design problem (DUFLNDP), which is defined on a network of locations. Every location is a client, and all have the potential to host a facility for servicing clients. There is a set of potential links between locations, on which arcs of the network can be constructed. 
	
	The problem covers a number of time periods. At each time there are budgets for opening new facilities and links. Open facilities and links also have associated maintenance or operating costs, which, together with the demand routing costs, form the total cost which is to be minimised.
	
	The main assumptions in this problem are:
	
	\begin{itemize}
		\item Facilities and links have unlimited capacity
		\item Once opened, facilities and links will remain open until at least the end of the planning horizon
		\item Facilities and links are opened instantaneously between time periods
	\end{itemize}
	
	Our notation is slightly different from Ghaderi and Jabalameli \cite{Ghaderi2013}, in particular the variable names. We also present a simplified version of the budgetary constraints which achieve the same outcome. The time periods we are optimising over start at $1$, and if a network exists already, we denote that as being at time $0$. We now present the model formulation:
	
	\subsection*{Sets}
	\begin{tabular}{rl}
	\hline $N$ & Set of network nodes. These include clients and facilities\\
	$A$ & Set of network arcs, both existing and potential. $A \subseteq N\times N$\\
	$T$ & Set of time periods \\ \hline
	\end{tabular}
	\subsection*{Parameters}
	\begin{tabular}{rl}
		\hline $d_{kt}$ & Demand of client $k \in N$ at time $t \in T$ \\
		$g_{it}$ & Cost of opening facility at node $i \in N$ at time $t \in T$\\
		$c_{ijt}$ & Cost of constructing arc $(i,j) \in A$ at time $t\in T$\\
		$\rho_{ijt}$ & Cost per unit of routing demand on arc $(i,j)\in A$ at time $t\in T$\\
		$f_{it}$ & Operating cost of open facility $i\in N$ at time $t\in T$\\
		$h_{ijt}$ & Operating cost of open arc $(i,j)\in A$ at time $t\in T$\\
		$\bar{B}_t$ & Available budget for opening facilities at time $t\in T$\\
		$\hat{B}_t$ & Available budget for opening arcs at time $t\in T$\\ \hline	
	\end{tabular}
	\subsection*{Variables}
	\begin{tabular}{rl}
		\hline $W_{it}$ & 1 if facility $i\in N$ is open at time $t\in T$, 0 otherwise\\
		$X_{ijt}$ & 1 if arc $(i,j)\in A$ is open at time $t\in T$, 0 otherwise\\
%		$Y_{ikt}$ & Fraction of demand of client $k\in N$  serviced by facility $i\in N$ at time $t\in T$\\
		$Z_{ijkt}$ & Fraction of demand of client $k\in N$ travelling along arc $(i,j)\in A$ at time $t\in T$\\ 
		$U_{it}$ & 1 if facility $i\in N$ is constructed at time $t\in T$, 0 otherwise\\
		$V_{ijt}$ & 1 if arc $(i,j)\in A$ is constructed at time $t\in T$, 0 otherwise\\
		\hline
	\end{tabular}
	\subsection*{Objective}
	\begin{equation}
	\text {Minimise} \sum_{t\in T}\left( \sum_{i\in N} f_{it} W_{it} + \sum_{k\in N}\sum_{(i,j)\in A} \rho_{ijt}d_{kt}Z_{ijkt} + \sum_{\substack{(i,j)\in A\\i<j}}h_{ijt}X_{ijt} \right)\label{OPOBJ}
	\end{equation}
	\subsection*{Constraints}

%	\begin{align}
%	W_{kt}+\sum_{j\in N} Z_{kjkt} = 1 & & \forall k\in N, \forall t\in T\label{OPC1}\\
%	\sum_{j\in N}Z_{jikt} = \sum_{j\in N}Z_{ijkt} + Y_{ikt} & & \forall i,k\in N,i\neq k, \forall t\in T\label{OPC2}\\
%	W_{kt} + \sum_{\substack{i\in N\\i\neq k}} Y_{ikt} = 1 & & \forall k\in N, \forall t\in T\label{OPC3}\\
%	Z_{ijkt} + Z_{jikt} \leq X_{ijt} & & \forall (i,j) \in A, i < j, \forall k\in N, \forall t\in T\label{OPC4}\\
%	Y_{ikt} \leq W_{it} & & \forall i,k\in N, i \neq k, \forall t\in T\label{OPC5}\\
%	W_{i,t-1} + U_{it} = W_{it} && \forall i\in N, \forall t\in T\label{OPC6}\\
%	X_{ij,t-1} + V_{ijt} = X_{ijt} && \forall (i,j)\in A, \forall t\in T\label{OPC7}\\
%	\sum_{t'=1}^t \sum_{i\in N} g_{it'}U_{it'}\leq \sum_{t'=1}^t \bar{B}_t && \forall t\in T\label{OPC8}\\
%	\sum_{t'=1}^t \sum_{(i,j)\in A} c_{ijt'}V_{ijt'}\leq \sum_{t'=1}^t \hat{B}_t && \forall t\in T\label{OPC9}\\
%	X_{ijt} = X_{jit} & & \forall (i,j) \in A, i < j, \forall t \in T \label{OPC10}
%	\end{align}

	\begin{align}
	W_{kt}+\sum_{j\in N} Z_{kjkt} \geq 1 & & \forall k\in N, \forall t\in T\label{OPC1}\\
	\sum_{j\in N}Z_{jikt} \leq \sum_{j\in N}Z_{ijkt} + W_{it} & & \forall i,k\in N,i\neq k, \forall t\in T\label{OPC3}\\
	Z_{jkkt} = 0 & & \forall k \in N, \forall j \in N, \forall t \in T \label{OPC2}\\
	Z_{ijkt} \leq X_{ijt} & & \forall (i,j) \in A, \forall k\in N, \forall t\in T\label{OPC4}\\
	W_{i,t-1} + U_{it} = W_{it} && \forall i\in N, \forall t\in T\label{OPC6}\\
	X_{ij,t-1} + V_{ijt} = X_{ijt} && \forall (i,j)\in A, \forall t\in T\label{OPC7}\\
	\sum_{t'=1}^t \sum_{i\in N} g_{it'}U_{it'}\leq \sum_{t'=1}^t \bar{B}_{t'} && \forall t\in T\label{OPC8}\\
	\sum_{t'=1}^t \sum_{(i,j)\in A} c_{ijt'}V_{ijt'}\leq \sum_{t'=1}^t \hat{B}_{t'} && \forall t\in T\label{OPC9}\\
	X_{ijt} = X_{jit} & & \forall (i,j) \in A, i < j, \forall t \in T \label{OPC10}\\
	W_{it}\in \{0,1\}, U_{it}\in \{0,1\} && \forall i\in N, \forall t\in T\\
	X_{ijt}\in \{0,1\},V_{ijt}\in \{0,1\},Z_{ijkt}\geq 0 && \forall (i,j)\in A, \forall t \in T, \forall k \in N
	\end{align}
	
%	The objective function (\ref{OPOBJ}) is the sum of three costs: the facility operating costs, the cost of moving demand to other facilities and the arc operating costs. Constraint (\ref{OPC1}) says that if a node $k$ has an open facility, then it services its own demand. If not, all demand must leave the node. Constraint (\ref{OPC2}) equates the demand entering a node with the demand leaving the node plus the demand that is serviced at that node. Constraint (\ref{OPC3}) ensures that the demand of every node is serviced, either at its own facility or at other facilities. Constraint (\ref{OPC4}) restricts the routing of demand to open arcs only, while constraint (\ref{OPC5}) allows demand to be serviced only at open facilities. Constraints (\ref{OPC6}) and (\ref{OPC7}) control the opening of facilities and arcs based on the relevant variables, and constraints (\ref{OPC8}) and (\ref{OPC9}) ensure that only those facilities and arcs that can be afforded are opened. Finally, (\ref{OPC10}) ensures that if a link is open in one direction, it is open in the other also.
	
	The objective function (\ref{OPOBJ}) is the sum of three costs: the facility operating costs, the cost of routing demand to other facilities and the arc operating costs. Constraints (\ref{OPC1}) say that if a node $k$ has an open facility, then it services its own demand. If not, all demand must leave the node. Constraints (\ref{OPC3}) are flow-conservation constraints at the nodes. Constraints (\ref{OPC2}) ensure demand can not be returned to the node of origin, thus eliminating cycles. Constraints (\ref{OPC4}) restrict the routing of demand to open arcs only. Constraints (\ref{OPC6}) and (\ref{OPC7}) control the opening of facilities and arcs based on the relevant construction variables, and constraints (\ref{OPC8}) and (\ref{OPC9}) ensure that the budget is not exceeded in any time period. Finally, constraints (\ref{OPC10}) enforce bi-directionality of the arcs.
	
	\section{Disaggregation and Benders Decomposition}\label{SecDBD}
	In this problem, the variables $Z_{ijkt}$ are continuous, where all others are integer (binary). The constraints which contain the continuous variables are (\ref{OPC1}-\ref{OPC4}), and these constraints are separate for each $k\in N$ and $t\in T$. Thus it is possible to disaggregate the sub-problems by time and facility. A discussion of disaggregation level can be found in Section \ref{SubSecDisaggLevel}. The goal of each sub-problem is to find the cheapest way of servicing the demand of that facility at that time. There are two possibilities for this: either the site is a facility and can service its own demand for free, or the demand is routed to the nearest (cheapest) open facility.
	
	\subsection{Benders Master Problem}\label{SubSecMaster}
	We denote the contribution of the sub-problem $(k,t)$ as $\theta_{kt}$. The master problem is:
	\begin{equation}
	\text {Minimise} \sum_{t\in T}\left( \sum_{i\in N} f_{it} W_{it} + \sum_{k\in N}d_{kt}\theta_{kt} + \sum_{\substack{(i,j)\in A\\i<j}}h_{ijt}X_{ijt} \right)\label{MPOBJ}\tag{MP-OBJ}
	\end{equation}
	Subject to:
	\begin{align}
	W_{i,t-1} + U_{it} = W_{it} && \forall i\in N, \forall t\in T\label{MPC6}\tag{M1}\\
	X_{ij,t-1} + V_{ijt} = X_{ijt} && \forall (i,j)\in A, \forall t\in T\label{MPC7}\tag{M2}\\
	\sum_{t'=1}^t \sum_{i\in N} g_{it'}U_{it'}\leq \sum_{t'=1}^t \bar{B}_t && \forall t\in T\label{MPC8}\tag{M3}\\
	\sum_{t'=1}^t \sum_{(i,j)\in A} c_{ijt'}V_{ijt'}\leq \sum_{t'=1}^t \hat{B}_t && \forall t\in T\label{MPC9}\tag{M4}\\
	\theta_{kt}\geq \textup{BendersOptimalityCut}(m,\mathbf{W},\mathbf{X},k,t) && \forall k \in N,\forall t \in T, \forall m\in\{1,...,M\}\label{MPBC}\tag{BOC}\\
	\textup{BendersFeasibilityCut}(p,\mathbf{W},\mathbf{X}) && \forall p\in\{1,...,P\}\tag{BFC}\label{MPFC}\\
	W_{it}\in \{0,1\},U_{i,t}\in \{0,1\},\theta_{it}\geq 0 && \forall i\in N, \forall t\in T\tag{M5}\\
	X_{ijt}\in \{0,1\}, V_{ijt}\in \{0,1\}&& \forall (i,j) \in A, \forall t \in T\tag{M6}
	\end{align}
	
	Constraints (\ref{MPBC}) represent the disaggregated Benders cuts, which are added as necessary after solving the associated sub-problems, which will be covered in Section \ref{SubSecSubProblems}. Similarly, constraints (\ref{MPFC}) represent the added constraints required for feasible sub-problems. M and P are the number of added Benders optimality and feasibility cuts respectively. For a feasible integer solution, $W^*$ and $X^*$, we solve each of the sub-problems and calculate their dual variables. If necessary, we add more Benders optimality or feasibility cuts.
	
	\subsection{Initial feasibility}\label{SubSecFeasibility}
	For the solution to be feasible, it must be possible to service the demand of every client for every time period. With the current master problem and Benders optimality cuts, it is possible for the master problem to be solved to optimality while one or more sub-problems are infeasible. The standard way of overcoming this is to add Benders feasibility cuts, however these are often ineffective \cite{deSa2013}.
	
	\iffalse If a sub-problem, which is a linear program, is infeasible, then by duality theory its dual will be unbounded. Thus, while solving the dual program, there will exist a vector such that the solution can move in the direction of that vector without ever violating constraints. To ensure this direction is bounded, and hence the sub-problem is feasible, one adds a Benders feasibility cut. This is of the same form as the Benders optimality cut, however instead of using the dual variables, we use the Farkas Dual, which can be returned by an LP solver. Denoting the Farkas Duals of the constraints as $\tilde{\gamma}_i$ and $\tilde{\lambda}_{ij}$ following the conventions above, the feasibility cut for sub-problem $(k,t)$ is:
	
	\begin{equation}
	0 \leq \tilde{\gamma}_k - \sum_{i\in N}W_{it}\tilde{\gamma}_i - \sum_{(i,j)\in A}X_{ijt}\tilde{\lambda}_{ij}\label{DBDFC}\tag{B2}
	\end{equation}
	
	Disaggregated Benders Decomposition now involves solving the master problem as an integer program, and each time a new incumbent node is found in the branch and bound tree, we solve the sub-problems. One of three cases now occurs: the sub-problem is feasible but the associated variable $\theta_{kt}$ is too small, so a Benders optimality cut is added; the sub-problem is feasible and the $\theta_{kt}$ does not violate the Benders cut, so no cut is added; or the sub-problem is infeasible and a Benders feasibility cut is added. However, using the standard Benders feasibility cuts is often inefficient, as can be seen in the Results section.
	\fi 
	
	A second option is to modify the master problem to ensure the sub-problems will always be feasible. Since links and facilities are only constructed, never destroyed, if the network is feasible in the first time period, it will be feasible for every time period. To ensure this happens, we modify the model to make the first time period a special case. The objective, parameters and variables remain unchanged, we only modify some constraints and add new ones. The modified and new constraints are:
	
	\subsubsection*{Constraints}
	\begin{align}
	Z_{ijkt} \leq X_{ijt} & & \forall (i,j) \in A, \forall k\in N, \forall t\in T\label{IFPC4}\tag{\ref{OPC4}a}\\
	X_{ij,t-1} + V_{ijt} = X_{ijt} && \forall (i,j)\in A, i < j, \forall t\in T, t > 2 \label{IFPC7}\tag{\ref{OPC7}a}\\
	X_{ijt} = X_{jit} & & \forall (i,j) \in A, i < j, \forall t \in T, t > 1 \label{IFPC10}\tag{\ref{OPC10}a}\\
	X_{ij,1} + X_{ji,1} \leq 1 & & \forall (i,j) \in A, i < j \label{IFPC11}\\
	X_{ij,0} + V_{ij,1} = X_{ij,1} + X_{ji,1} & & \forall (i,j) \in A, i < j \label{IFPC12}\\
	X_{ij,1} + X_{ji,1} + V_{ij,2} = X_{ij,2} & & \forall (i,j) \in A, i < j \label{IFPC13}\\
	\sum_{\substack{j\in N\\(i,j)\in A}} X_{ij,1} + W_{i,1} \geq 1 & & \forall i \in N \label{IFPC14}\\
	\sum_{i\in N} W_{i,1} \geq 1 & &\label{IFPC15}	
	\end{align}
	
	The modification to constraint (\ref{OPC4}) enforces directionality of arcs in the first time period, (\ref{OPC7},\ref{OPC10}) are modified appropriately, and the addition of (\ref{IFPC11}) ensures only one direction is allowed for that time. Constraints (\ref{IFPC12}-\ref{IFPC13}) handle the budget constraints, to ensure that if a direction is built in the first time period, the opposite direction will be built for free in the second time period.
	
	These modifications allow us to add constraints (\ref{IFPC14}-\ref{IFPC15}), which ensure that each location has either a facility at the location or an arc leaving the location, and that at least one facility must exist, respectively. This way, either a node is a facility, or it is connected to a node which is either a facility, or connected to a node... and so on. This only fails if a cycle occurs where multiple nodes are connected to each other and none have facilities, so cycle-breaking may be necessary. This change in formulation is more useful in the instances when there is no pre-existing network, as when there are fixed elements of the network there is less choice in its design.
	
	\subsection{IIS feasibility cuts}\label{SubSecIIS}
	To handle the case where cycles occur, we add cycle-breaking feasibility cuts, where the sum of facilities in the cycle plus the sum of arcs leaving the cycle must be at least one. Using Gurobi, we compute the Irreducible Inconsistent Subsystem (IIS), which is ``a subset of the constraints and variable bounds of the original model. If all constraints in the model except those in the IIS are removed, the model is still infeasible. However, further removing any one member of the IIS produces a feasible result.''\cite{gurobi}
	
	The IIS is then a collection of capacity constraints on nodes and arcs which, when lifted, make the sub-problem feasible. This leaves us with the nodes and arcs which can be expanded or added to resolve the infeasibility. We then add a feasibility cut of the form:
	\begin{equation}
	\sum_{i\in \textup{IIS}} W_{i0} + \sum_{(a,b)\in \textup{IIS}} X_{ab0} \geq 1
	\end{equation}
	
	This ensures that enough facilities and arcs will be opened that the demand from the infeasible source node can be served.
	
	\subsection{Sub-Problems}\label{SubSecSubProblems}
	If we have a feasible solution for the integer variables $W^*$ and $X^*$, we can solve the sub-problems as a collection of linear programs. Since $d_{kt}$ only depends on $k$ and $t$, we can leave it out of the objective of the sub-problem and instead apply it to the objective of the master problem. The contribution of each sub-problem to the master problem is represented by $\theta_{kt}$. For each $k\in N$ and $t\in T$ we have the sub-problem:
	\begin{equation}
	\text {Minimise}  \sum_{(i,j)\in A} \rho_{ijt}Z_{ijkt} \label{SPOBJ}\tag{SP-OBJ}
	\end{equation}
	Subject to:
	\begin{align}
	-\sum_{j\in N} Z_{kjkt} \leq W^*_{kt}-1&&\label{SPC1}\tag{S1}\\
	\sum_{j\in N}Z_{jikt} - \sum_{j\in N}Z_{ijkt} \leq W^*_{it} & & \forall i\in N\setminus\{k\}\label{SPC3}\tag{S2}\\
	Z_{jkkt} = 0 && \forall j \in N \label{SPC2}\tag{S3}\\
	Z_{ijkt} \leq X^*_{ijt} & & \forall (i,j) \in A\label{SPC4}\tag{S4}\\
	Z_{ijkt} \geq 0 && \forall (i,j)\in A\tag{S5}
	\end{align}
	
	Constraint (\ref{SPC1}) has been rearranged to show its similarity to (\ref{SPC3}), which we will take advantage of when formulating the explicit dual. There are two collections of dual variables that we are interested in: $\gamma_{i}$ for each node constraint (\ref{SPC1},\ref{SPC3}) and $\lambda_{ij}$ for each arc constraint (\ref{SPC4}). These variables then lead to the following dual formulation:
	
	\begin{equation}
	\text {Maximise } \gamma_k- \sum_{i\in N} \gamma_i W^*_{it} -\sum_{(i,j)\in A}\lambda_{ij}X^*_{ijt} \label{DualObj}\tag{D-OBJ}
	\end{equation}
	Subject to:
	\begin{align}
	&\rho_{ijt} + \lambda_{ij} + \gamma_j - \gamma_i \geq 0 & \forall (i,j) \in A, j\neq k \label{ConDual}\tag{D1}\\
	& \lambda_{ij} \geq 0, \gamma_i \geq 0& \forall (i,j)\in A, \forall i\in N \tag{D2}
	\end{align}
	
	The reason that constraints (\ref{ConDual}) do not apply when $j=k$ is because in those cases $\gamma_j$ is replaced by the unbounded dual variable associated with constraints (\ref{SPC2}), and since this dual variable does not appear in the objective function, it can be set to positive infinity, thus ensuring feasibility of the dual constraints for those arcs.
	
	Magnanti and Wong \cite{Magnanti1981} describe a natural interpretation of the dual variables for the UFL. In a similar way, the dual variables of this problem have a natural interpretation. Here each $\gamma_i$ represents the saving associated with opening a facility at location $i$, and $\lambda_{ij}$ is the saving from opening a new arc between $i$ and $j$. Constraint (\ref{ConDual}) ensures that the reduced cost of each arc is non-negative. This formulation yields the following Benders cut:
	
	\begin{equation}
	\theta_{kt} \geq \gamma_k-\sum_{i\in N}\gamma_i W_{it}-\sum_{(i,j)\in A}\lambda_{ij}X_{ijt}\tag{BC}
	\end{equation}
	
	One can solve these sub-problems as linear programs and extract the dual variables provided by the solver in order to construct a Benders cut. Alternatively, one can solve the sub-problems and produce the required dual variables analytically.
	
	\subsection{Analytic solution to Benders sub-problems}\label{SubSecAnalytic}
	Each sub-problem is a shortest path problem. For each location, one must find the cheapest way of servicing its demand, either at a facility at the source or by routing the demand to another location with a facility. Each sub-problem is indexed by $k$ and $t$, where $k$ is the source node and $t$ is the time period. 
	
	Magnanti and Wong \cite{Magnanti1981} note that the analytic dual variables for the UFL problem have a natural interpretation; the dual variables for the DUFLNDP also have a natural interpretation. $\gamma_i$ represents the saving associated with opening a facility at location $i$ and $\lambda_{ij}$ the saving from opening an arc from $i$ to $j$. Magnanti and Wong also demonstrate for the UFL problem that there are two different possible analytic Benders cuts which can be used. The DUFLNDP shares this property, as we now show.
	
	\subsubsection{First analytic Benders cut}
	The algorithm for computing the dual variables can be found in Algorithm \ref{AlgBendOptFirst}. Note that it assumes that the facility at the source node $k$ is closed, as otherwise the solution is trivial. We begin by constructing a shortest path tree from the source location, giving each node a distance $D_i$ from the source node. If there is no path between $k$ and $i$, then $D_i = \infty$. Now, the value of $\gamma_k$ is assigned the length of the shortest path to the nearest open facility $i^*$, that is, $\gamma_k = \textup{min}\{D_i|W^*_{it} = 1\}\equiv D_{i^*}$. For all other nodes, $\gamma_i = \textup{max}(0,\gamma_k-D_i)$. 
	
	\iffalse \begin{algorithm}
		\caption{Algorithm for computing dual variables for Benders optimality cut for sub-problem $(k,t)$, assuming the sub-problem is feasible.}
		\label{AlgBendOpt1}
		\begin{algorithmic}
			\State Begin with master problem solution $W^*_{it},X^*_{abt},\theta^*_{kt}$ $\forall i\in N, \forall (a,b)\in A$
			
			\State $D_i\leftarrow \infty$ $\forall i\in N$
			\State $D_k\leftarrow 0$
			\State $Q \leftarrow \{k\}$
			\While {$Q \neq \emptyset$}
				\State $i\leftarrow \arg\min\limits_i\{D_i|i\in Q\}$
				\If {$W^*_{kt}=1$}
					\State $\bar{\theta}_{kt} = D_i$
					\State \textbf{break}
				\EndIf
				\State $Q \leftarrow Q\setminus \{i\}$
				\For {$j\in \{j^*\in N|(i,j^*)\in A\}$}
					\If {$X^*_{ijt}=1$}
						\State $d = D_i+\rho_{ijt}$
						\If {$d<D_j$}
							\State $D_j \leftarrow d$
							\State $Q\leftarrow Q\cup\{j\}$
						\EndIf
					\EndIf				
				\EndFor
			\EndWhile
			\State $\gamma_k = \min\limits_j\{D_j|W^*_{jt}=1,j\in N\setminus \{k\}\}$
			\If {$\theta^*_{kt}<\gamma_k$}
				\For {$i\in N$}
					\State $\gamma_i \leftarrow \textup{max}(0,\gamma_k-D_i)$
				\EndFor
				\For {$(i,j)\in A$}
					\If {$X^*_{ijt}=0$}
						\State $\lambda_{ij} \leftarrow \textup{max}(0,\gamma_i-\gamma_j-\rho_{ijt})$
					\EndIf
				\EndFor
				\State Add Constraint $\theta_{kt}\geq\bar{\theta}_{kt}-\sum\limits_{i\in N}\gamma_iW_{it}-\sum\limits_{(i,j)\in A}\lambda_{ijt}X_{ijt}$
			\EndIf
		\end{algorithmic}
	\end{algorithm} \fi

	\begin{algorithm}[t]
		\caption{Algorithm for computing dual variables for first analytic Benders optimality cut for sub-problem $(k,t)$, assuming the sub-problem is feasible.}
		\label{AlgBendOptFirst}
		\begin{algorithmic}
			\State Begin with master problem solution $W^*_{it},X^*_{abt},\theta^*_{kt}$ $\forall i\in N, \forall (a,b)\in A$
			\State Compute shortest distance $D_i$ from $k$ to $i$ for all nodes $i\in N\setminus\{k\}$
			\State $\gamma_k = \min\limits_j\{D_j|W^*_{jt}=1,j\in N\setminus \{k\}\}$
			\For {$i\in N$}
			\State $\gamma_i \leftarrow \textup{max}(0,\gamma_k-D_i)$
			\EndFor
			\For {$(i,j)\in A$}
			\If {$X^*_{ijt}=0$}
			\State $\lambda_{ij} \leftarrow \textup{max}(0,\gamma_i-\gamma_j-\rho_{ijt})$
			\Else 
			\State $\lambda_{ij} \leftarrow 0$
			\EndIf
			\EndFor
			\State Add Constraint $\theta_{kt}\geq\bar{\theta}_{kt}-\sum\limits_{i\in N}\gamma_iW_{it}-\sum\limits_{(i,j)\in A}\lambda_{ijt}X_{ijt}$
		\end{algorithmic}
	\end{algorithm}

	\begin{algorithm}[t]
		\caption{Algorithm for computing dual variables for second analytic Benders optimality cut for sub-problem $(k,t)$, assuming the sub-problem is feasible.}
		\label{AlgBendOptSecond}
		\begin{algorithmic}
			\State Begin with master problem solution $W^*_{it},X^*_{abt},\theta^*_{kt}$ $\forall i\in N, \forall (a,b)\in A$
			\State Compute shortest distance $\bar{D}_i$ from $i$ to an open facility for all nodes $i\in N$
			\State Open all arcs $(i,j)\in A$
			\State Compute shortest distance $D_i$ from $k$ to $i$ for all nodes $i\in N\setminus\{k\}$
			\State $\gamma_k = \bar{D}_k$
			\For {$i\in N$}
			\State $\gamma_i \leftarrow \max(0,\min(\gamma_k-D_i,\bar{D}_i))$
			\EndFor
			\For {$(i,j)\in A$}
			\If {$X^*_{ijt}=0$}
			\State $\lambda_{ij} \leftarrow \textup{max}(0,\gamma_i-\gamma_j-\rho_{ijt})$
			\Else 
			\State $\lambda_{ij} \leftarrow 0$
			\EndIf
			\EndFor
			\State Add Constraint $\theta_{kt}\geq\bar{\theta}_{kt}-\sum\limits_{i\in N}\gamma_iW_{it}-\sum\limits_{(i,j)\in A}\lambda_{ijt}X_{ijt}$
		\end{algorithmic}
	\end{algorithm}
	
	Next we calculate the values for the dual variables $\lambda_{ij}$, associated with the arcs $(i,j)\in A$. For all open arcs ($X^*_{ijt}=1$), $\lambda_{ij}=0$. For any closed arc $(i,j)$, $\lambda_{ij} = \textup{max} (0, \gamma_i- \gamma_j- \rho_{ij})$. 
	
	\begin{theorem}
		The dual variables calculated using Algorithm \ref{AlgBendOptFirst} are dual optimal.
	\end{theorem}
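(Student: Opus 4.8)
The plan is to verify directly that the vector $(\gamma,\lambda)$ produced by Algorithm~\ref{AlgBendOptFirst} is (i) dual feasible and (ii) achieves a dual objective value equal to the primal optimum $\bar\theta_{kt}=\gamma_k=D_{i^*}$, where $i^*$ is the nearest open facility. By LP duality (weak duality gives the optimality once the two values coincide), this suffices.

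\emph{Feasibility.} I would first check the sign constraints (D2): $\gamma_i=\max(0,\gamma_k-D_i)\ge 0$ by construction, and $\lambda_{ij}=\max(0,\gamma_i-\gamma_j-\rho_{ijt})\ge 0$ for closed arcs while $\lambda_{ij}=0$ for open arcs, so both are nonnegative. For the reduced-cost constraints (D1), $\rho_{ijt}+\lambda_{ij}+\gamma_j-\gamma_i\ge 0$ for each $(i,j)\in A$ with $j\ne k$: when $X^*_{ijt}=0$ this is immediate from the definition of $\lambda_{ij}$ as a nonnegative quantity at least $\gamma_i-\gamma_j-\rho_{ijt}$; when $X^*_{ijt}=1$ we have $\lambda_{ij}=0$, so we must show $\gamma_i\le\gamma_j+\rho_{ijt}$. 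This is the crux and follows from the shortest-path structure: since the arc $(i,j)$ is open, $D_j\le D_i+\rho_{ijt}$, hence $\gamma_k-D_j\ge\gamma_k-D_i-\rho_{ijt}$, and taking $\max(0,\cdot)$ on both sides together with $\rho_{ijt}\ge 0$ yields $\gamma_i=\max(0,\gamma_k-D_i)\le\max(0,\gamma_k-D_j)+\rho_{ijt}=\gamma_j+\rho_{ijt}$ (the step using $\max(0,a)\le\max(0,a+r)+r$ for $r\ge0$, or more carefully a small case split on the signs of $\gamma_k-D_i$ and $\gamma_k-D_j$).

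\emph{Objective value.} I would then evaluate the dual objective $\gamma_k-\sum_i\gamma_i W^*_{it}-\sum_{(i,j)}\lambda_{ij}X^*_{ijt}$ at this point. The last sum vanishes because $\lambda_{ij}=0$ on every open arc. For the middle sum, I claim $\gamma_i W^*_{it}=0$ for all $i$: indeed, if $W^*_{it}=1$ then $D_i\ge D_{i^*}=\gamma_k$ by the choice of $i^*$ as the closest open facility, so $\gamma_i=\max(0,\gamma_k-D_i)=0$. Hence the dual objective equals $\gamma_k=D_{i^*}$, which is exactly the cost of routing the demand along the shortest path to the nearest open facility — the optimal primal value $\bar\theta_{kt}$. (If $W^*_{kt}=1$ the problem is trivial with value $0$ and is excluded by hypothesis; feasibility of the sub-problem guarantees some open facility is reachable so $\gamma_k<\infty$.) Matching primal and dual values establishes optimality.

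\emph{Main obstacle.} The routine parts are the sign checks and the objective computation; the one step needing care is the reduced-cost inequality (D1) on \emph{open} arcs, where $\lambda_{ij}$ is forced to $0$ and one must lean on the triangle-type inequality $D_j\le D_i+\rho_{ijt}$ coming from $D$ being a genuine shortest-path labelling on the open subnetwork. A secondary subtlety is the treatment of $j=k$: as the excerpt notes, the constraint (D1) is not imposed there because the free dual variable from the equality constraints (\ref{SPC2}) can be pushed to $+\infty$, so no inequality on those arcs needs verifying. I would also briefly note the degenerate cases $D_i=\infty$ (then $\gamma_i=0$, consistent with the formulas) to keep the argument airtight.
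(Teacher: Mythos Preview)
Your proposal is correct and follows essentially the same route as the paper: verify (D1)--(D2) directly, using the shortest-path inequality $D_j\le D_i+\rho_{ijt}$ for the only nontrivial case (open arcs with $\lambda_{ij}=0$), and then observe that complementary slackness collapses the dual objective to $\gamma_k=D_{i^*}$, the primal optimum. The paper's treatment of the open-arc step simply restricts attention to $\gamma_i>0$ (so that $\gamma_i=\gamma_k-D_i$ exactly) and chains the inequalities, which is precisely your ``small case split'' made explicit; your parenthetical inequality $\max(0,a)\le\max(0,a+r)+r$ is true but not quite the form you actually need, so the case split is indeed the clean way to finish.
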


	\begin{proof}
	For these dual variables to form a dual feasible solution, they must satisfy the Constraints (\ref{ConDual}). The constraints are trivially satisfied for any arc where $\gamma_i=0$. For all closed arcs, $\lambda_{ij} \geq \gamma_i-\gamma_j-\rho_{ijt}$, which satisfies Constraint (\ref{ConDual}).
	
	For any open arc $(i,j)$, $\lambda_{ij}=0$, so we must show that $\rho_{ij} + \gamma_j - \gamma_i \geq 0$. By the property of the shortest path distances, $D_j \leq D_i + \rho_{ij}$, or $\rho_{ij}+D_i-D_j\geq 0$. We also have, by construction of the dual variables, that $\gamma_j \geq \gamma_k-D_j$, or $D_j \geq \gamma_k - \gamma_j$. Finally, we are only considering where $\gamma_i>0$, and in this case we have that $\gamma_i = \gamma_k - D_i$. Combining these, we get:
	\begin{align*}
	\rho_{ij} + \gamma_j - \gamma_i &= \rho_{ij} + \gamma_j - (\gamma_k - D_i)\\
	&\geq \rho_{ij} + (\gamma_k-D_j) - (\gamma_k-D_i)\\
	&\geq \rho_{ij} + D_i - D_j\\
	&\geq 0
	\end{align*}
	
	So the dual variables obtained from Algorithm \ref{AlgBendOptFirst} are dual feasible. The objective value given by these dual variables is the same as the optimal objective value of the primal problem, which is the length of the shortest path ($D_{i^*}=\gamma_k$), and for all nodes, either $\gamma_i = 0$ or $W^*_{it}=0$, and likewise for arcs. Thus the dual variables form a dual optimal solution, and may be used to add a Benders optimality cut to the master problem.
	\end{proof}
	
	\iffalse \begin{figure}
		\includegraphics[width=0.4\textwidth]{}
		\caption{Cases for adding arcs to network. All arcs have unit length, node $k$ is the source and $F_1$ and $F_2$ are facilities. Arcs $l_1$, $l_2$ and $l_3$ are potential links which are currently closed.}\label{FigLambdas}
	\end{figure}\fi
	
	The cuts generated using these dual variables are Pareto-optimal, which is important for improving convergence of the master problem \cite{Magnanti1981}. We prove they are Pareto-optimal in Section \ref{SubSecPOProof}. In terms of the natural interpretation, we place much of the savings on the arcs, since if facilities beyond closed arcs are opened, no saving will be obtained until the arcs are open. As such, the first analytic Benders cut is referred to as a $\lambda$-heavy cut. While this Benders cut is Pareto-optimal, it may not be the only Pareto-optimal analytic cut which can be derived from a given solution. 
	
	\begin{figure}
		\centering
		\includegraphics[width=0.9\textwidth]{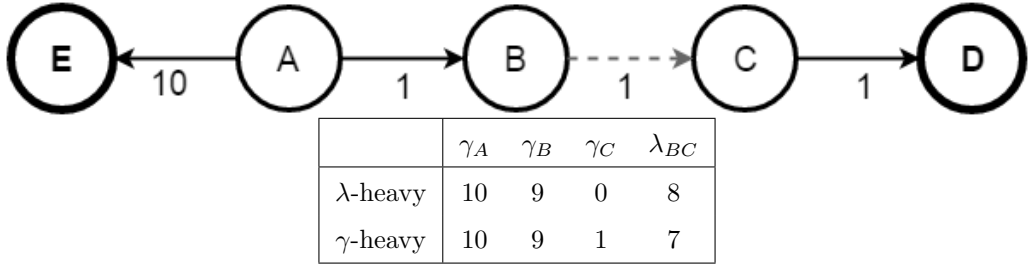}
		\begin{tabular}{|c|cccc|}
			\hline & $\gamma_A$ & $\gamma_B$ & $\gamma_C$ & $\lambda_{BC}$\\
			\hline $\lambda$-heavy & 10 & 9 & 0 & 8\\
			$\gamma$-heavy & 10 & 9 & 1 & 7\\
			\hline
		\end{tabular}
		\caption{Example network with dual variables from both analytic cuts. Node A is the source node. Nodes D and E are open facilities, and the arc between B and C is currently closed.}
		\label{FigTwoCut}
	\end{figure}
	
	\subsubsection{Second analytic Benders cut}
	Another algorithm for computing dual variables for an analytic Benders optimality cut is Algorithm \ref{AlgBendOptSecond}. The construction of the second analytic cut is similar to the first, except here we also calculate the distance from each node to their closest open facility, $\bar{D}_i$. Then, before computing the distance of all nodes from the source $k$, we open all arcs. Again, $\gamma_k$ is the length of the shortest path, this time given by $\bar{D}_k$.
	 
	Now, $\gamma_i = \max(0,\min(\gamma_k-D_i,\bar{D}_i))$, that is, it is the smaller of either the saving from opening a facility at node $i$ assuming all arcs are open, or the shortest distance to an open facility under the current configuration of arcs. For nodes further away than $i^*$, $\gamma_i=0$ again. The values of $\lambda_{ij}$ are calculated in the same way as before. These dual variables also form a dual feasible solution for similar reasons as the first. 
	
	\begin{theorem}
		The dual variables computed using Algorithm \ref{AlgBendOptSecond} are dual optimal
	\end{theorem}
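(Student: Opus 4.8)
The plan is to follow the template of the proof of Theorem~1: show that the vectors $\gamma$ and $\lambda$ returned by Algorithm \ref{AlgBendOptSecond} are feasible for the dual (\ref{DualObj})--(\ref{ConDual}), and then show that the dual objective they attain equals the optimal value of the primal sub-problem, so that LP duality forces them to be dual optimal.

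The easy part is feasibility. Non-negativity of $\gamma$ and $\lambda$ is immediate from the $\max(0,\cdot)$ assignments, and constraint (\ref{ConDual}) holds trivially whenever $\gamma_i=0$. For a closed arc $(i,j)$ it holds because $\lambda_{ij}=\max(0,\gamma_i-\gamma_j-\rho_{ijt})\ge\gamma_i-\gamma_j-\rho_{ijt}$. So the only substantive case is an open arc $(i,j)$ with $j\neq k$ and $\gamma_i>0$, for which $\lambda_{ij}=0$ and we must verify $\gamma_i\le\gamma_j+\rho_{ijt}$.

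The key point, and where the argument departs from that of Theorem~1, is that $\gamma_i>0$ yields two upper bounds at once, $\gamma_i\le\bar{D}_i$ and $\gamma_i\le\gamma_k-D_i$, and that two triangle inequalities are available along the arc $(i,j)$: since every arc is opened before $D$ is computed, $D_j\le D_i+\rho_{ijt}$ holds for all $(i,j)\in A$; and since $(i,j)$ is currently open, $\bar{D}_i\le\rho_{ijt}+\bar{D}_j$. I would then case-split on which term realises $\gamma_j=\max\big(0,\min(\gamma_k-D_j,\bar{D}_j)\big)$. If $\gamma_j=\bar{D}_j$, chain $\gamma_i\le\bar{D}_i\le\rho_{ijt}+\bar{D}_j=\rho_{ijt}+\gamma_j$. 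If $\gamma_j=\gamma_k-D_j$, chain $\gamma_i\le\gamma_k-D_i\le\gamma_k-D_j+\rho_{ijt}=\rho_{ijt}+\gamma_j$, using $D_j\le D_i+\rho_{ijt}$. If $\gamma_j=0$, then one of $\bar{D}_j=0$ or $\gamma_k-D_j\le0$ must hold, and the corresponding chain above yields $\gamma_i\le\rho_{ijt}=\rho_{ijt}+\gamma_j$. This exhausts the cases, so the computed point is dual feasible. I expect this case analysis --- in particular, being careful in the ``mixed'' situations where $\gamma_i$ and $\gamma_j$ are realised by different terms --- to be the only real obstacle; it ultimately comes down to remembering that $D$ obeys the triangle inequality over all of $A$ whereas $\bar{D}$ obeys it only over the current (open) sub-network, which is nonetheless exactly where it is needed.

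It remains to match the objectives. Under the standing assumptions ($W^*_{kt}=0$ and feasibility of the sub-problem), problem (\ref{SPOBJ})--(\ref{SPC4}) is a minimum-cost flow sending one unit from $k$ to the open facilities through open arcs, whose optimal value is the shortest such distance, namely $\bar{D}_k=\gamma_k$. On the dual side, $\lambda_{ij}X^*_{ijt}=0$ for every arc because $\lambda_{ij}=0$ whenever $X^*_{ijt}=1$, and $\gamma_iW^*_{it}=0$ for every node because $W^*_{it}=1$ forces $\bar{D}_i=0$ and hence $\gamma_i=\max(0,\min(\gamma_k-D_i,0))=0$ (with $W^*_{kt}=0$ covering $i=k$). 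Therefore the dual objective (\ref{DualObj}) equals $\gamma_k=\bar{D}_k$, the primal optimum; together with dual feasibility and weak duality this shows that the variables produced by Algorithm \ref{AlgBendOptSecond} are dual optimal, so the cut they generate is a valid Benders optimality cut.
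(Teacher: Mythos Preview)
Your argument is correct and follows essentially the same route as the paper's own proof: verify (\ref{ConDual}) by handling $\gamma_i=0$ and closed arcs trivially, and for an open arc with $\gamma_i>0$ combine the two upper bounds $\gamma_i\le\gamma_k-D_i$, $\gamma_i\le\bar D_i$ with the two triangle inequalities $D_j\le D_i+\rho_{ijt}$ and $\bar D_i\le\bar D_j+\rho_{ijt}$ according to which term governs $\gamma_j$; then match the dual objective to $\bar D_k$ using $\lambda_{ij}X^*_{ijt}=0$ and $\gamma_iW^*_{it}=0$. The only cosmetic difference is that the paper phrases the case split as ``either $\gamma_j\ge\gamma_k-D_j$ or $\gamma_j\ge\bar D_j$'' (which absorbs your $\gamma_j=0$ case), whereas you make that case explicit; both are equivalent.
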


	\begin{proof}
	Once again, the constraints are satisfied trivially for $\gamma_i=0$ and by construction for closed arcs. For open arcs, $\lambda_{ij}=0$, so again we need to prove that $\rho_{ij} + \gamma_j - \gamma_i \geq 0$. Also by the properties of shortest distances, we have $\bar{D}_i \leq \bar{D}_j + \rho_{ij}$, or $\rho_{ij}+\bar{D}_j-\bar{D}_i\geq 0$. For these dual variables, when $\gamma_i>0$, we have that $\gamma_i\leq\gamma_k-D_i$ and $\gamma_i\leq \bar{D}_i$. Finally, either $\gamma_j \geq \gamma_k-D_j$ or $\gamma_j \geq \bar{D}_j$.
	
	If $\gamma_j \geq \gamma_k-D_j$, the proof for the first set of dual variables holds since $\gamma_i \leq \gamma_k-D_i$. When $\gamma_j \geq \bar{D}_j$, the fact that $\gamma_i\leq \bar{D}_i$ leads to:
	\begin{align*}
	\rho_{ij} + \gamma_j - \gamma_i &\geq \rho_{ij} + \bar{D}_j - \gamma_i\\
	&\geq \rho_{ij} + \bar{D}_j - \bar{D}_i\\
	&\geq 0
	\end{align*}
	
	So the dual variables satisfy the dual constraints for all arcs. For the same reasons as before, these dual variables give the primal objective value for the current master problem solution, making them a dual optimal solution. 
	\end{proof}
	
	This analytic solution also yields a Pareto-optimal Benders cut, which can be proven in a similar way to the first analytic solution. Where the first was a $\lambda$-heavy cut, the second is called a $\gamma$-heavy cut, since we focus on the saving associated with opening facilities assuming the arcs are already open. We now have (up to) two unique Benders cuts that can be added at each integer solution for each node $k$ and time $t$. If the two cuts are different, then they will hold exactly for different, overlapping sets of master problem solutions.
	
	Consider the network in Figure \ref{FigTwoCut}. Both cuts will be tight for the cases where (B,C) and C are the same, either both open or both closed. However, only the $\lambda$-heavy cut will be tight when C is open but (B,C) is closed, and only the $\gamma$-heavy cut will be tight when (B,C) is open but C is closed. As such, it can be beneficial to add both cuts if possible, as can be seen in the Results section. Note that these Benders cuts are not guaranteed to be different for all solutions, and it is simple to construct a network where both algorithms yield the same Benders cut. As such, one should always take care not to add duplicate Benders cuts to their model, to avoid unnecessarily burdening the solver.
	
	\subsection{Pareto-optimality of the analytically-derived Benders cuts}\label{SubSecPOProof}
	Magnanti and Wong \cite{Magnanti1981} describe the importance of using Pareto-optimal cuts when using Benders decomposition. In this section we show that the analytic Benders cuts are Pareto-optimal. Since a Benders cut is a linear function of the current network configuration, it can be described as $\theta \geq \bar{\theta}(y)$, for $y\in Y$ where $Y$ is the set of all feasible solutions to the master problem. Let the contribution to the objective value for network configuration $y$ be given by $\bar{\theta}^*(y)$. The following definitions are paraphrased from Magnanti and Wong:
	
	\begin{ThmDef}
		A Benders cut $\theta \geq \bar{\theta}^a(y)$ dominates another Benders cut $\theta \geq \bar{\theta}^b(y)$ if $\bar{\theta}^a(y) \geq \bar{\theta}^b(y)$ for all feasible $y\in Y$ and is a strict inequality for at least one feasible $y$.
	\end{ThmDef}

	This definition leads to the following lemma:
	
	\begin{ThmLem}
		If $\theta \geq \bar{\theta}^a(y)$ is dominated by $\theta \geq \bar{\theta}^b(y)$, then for all feasible solutions $y^i$ where $\bar{\theta}^a(y^i)=\bar{\theta}^*(y^i)$, $\bar{\theta}^b(y^i)=\bar{\theta}^*(y^i)$.
	\end{ThmLem}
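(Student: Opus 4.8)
The plan is to combine two facts: the validity of any Benders optimality cut, which gives a pointwise \emph{upper} bound on both $\bar{\theta}^a$ and $\bar{\theta}^b$ in terms of the true contribution $\bar{\theta}^*$; and the domination hypothesis, which gives a pointwise \emph{lower} bound on $\bar{\theta}^b$ in terms of $\bar{\theta}^a$. Squeezing these at a point where cut $a$ is tight forces cut $b$ to be tight there as well.

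First I would record the validity observation. Any Benders optimality cut $\theta \geq \bar{\theta}(y)$ added to the master problem is a valid inequality, i.e.\ it must not cut off the true sub-problem value for any feasible configuration; hence $\bar{\theta}(y) \leq \bar{\theta}^*(y)$ for every $y \in Y$. Applying this to the two cuts in the statement yields $\bar{\theta}^a(y) \leq \bar{\theta}^*(y)$ and $\bar{\theta}^b(y) \leq \bar{\theta}^*(y)$ for all $y \in Y$. Next, fix a feasible $y^i$ with $\bar{\theta}^a(y^i) = \bar{\theta}^*(y^i)$. By definition of domination, $\bar{\theta}^b(y) \geq \bar{\theta}^a(y)$ for all $y \in Y$, so in particular $\bar{\theta}^b(y^i) \geq \bar{\theta}^a(y^i) = \bar{\theta}^*(y^i)$. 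Combining this with the validity bound $\bar{\theta}^b(y^i) \leq \bar{\theta}^*(y^i)$ gives $\bar{\theta}^b(y^i) = \bar{\theta}^*(y^i)$, which is exactly the claim.

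The argument is a short chain of inequalities, so I do not expect a genuine obstacle; the only point that needs to be stated carefully is the appeal to validity of the Benders cuts, namely that every such cut underestimates $\bar{\theta}^*$ on all of $Y$, since it is precisely the interplay of that upper bound with the domination lower bound that produces the squeeze. Everything else follows immediately from the definition of domination.
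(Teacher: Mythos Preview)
Your proof is correct and follows essentially the same approach as the paper: the paper's justification is the same one-line squeeze, noting that $\bar{\theta}^b(y^i)\leq\bar{\theta}^*(y^i)$ by validity of the Benders cut and $\bar{\theta}^b(y^i)\geq\bar{\theta}^a(y^i)=\bar{\theta}^*(y^i)$ by domination. Your write-up is simply a more detailed version of the same argument.
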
 
	
	This is easy to see, since $\bar{\theta}^b(y^i)\leq\bar{\theta}^*(y^i)$ by definition of being a valid Benders cut, and  $\bar{\theta}^b(y^i)\geq\bar{\theta}^a(y^i)=\bar{\theta}^*(y^i)$ by definition of being a dominating cut.

	\begin{ThmDef}
		A Benders cut $\theta \geq \bar{\theta}^a(y)$ is Pareto-optimal if it is not dominated by any other Benders cuts.
	\end{ThmDef}
	
	One can prove that a Benders cut is Pareto-optimal by assuming that there exists another cut which dominates it, finding enough points $y\in Y$ where the Pareto-optimal cut equals the objective value, and specifying that the dominating cut must also equal the objective value at these points. This leads to all terms of the dominating cut being fixed to those of the Pareto-optimal cut. Thus there are no cuts which dominate the original cut, and it is Pareto-optimal. We now show that our Benders optimality cuts are Pareto-optimal.
	
	\begin{theorem}
		Benders optimality cuts derived using Algorithm \ref{AlgBendOptFirst} are Pareto-optimal
	\end{theorem}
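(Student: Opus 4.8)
The plan is to follow exactly the template sketched above: assume the cut of Algorithm~\ref{AlgBendOptFirst} is dominated, exhibit a spanning family of feasible master solutions at which that cut is tight, and invoke the preceding Lemma to force the dominating cut to coincide with it at all of those points, which then pins down every coefficient and the constant.

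Write the Algorithm~\ref{AlgBendOptFirst} cut as $\theta_{kt}\ge\bar\theta^a(\mathbf W,\mathbf X)$ with $\bar\theta^a(\mathbf W,\mathbf X)=\gamma_k-\sum_{i\in N}\gamma_iW_{it}-\sum_{(i,j)\in A}\lambda_{ij}X_{ijt}$, and suppose it is dominated by $\theta_{kt}\ge\bar\theta^b(\mathbf W,\mathbf X)$ with $\bar\theta^b(\mathbf W,\mathbf X)=\beta-\sum_{i\in N}\alpha_iW_{it}-\sum_{(i,j)\in A}\mu_{ij}X_{ijt}$; since $\bar\theta^b$ arises from a dual-feasible solution, $\alpha_i\ge0$ and $\mu_{ij}\ge0$. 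By the preceding Lemma, at any feasible $y$ with $\bar\theta^a(y)=\bar\theta^*(y)$ we also have $\bar\theta^b(y)=\bar\theta^*(y)=\bar\theta^a(y)$. First I would observe that $y^*=(\mathbf W^*,\mathbf X^*)$ is itself such a point: every open facility $i$ has $D_i\ge D_{i^*}=\gamma_k$, hence $\gamma_i=0$, and every open arc has $\lambda_{ij}=0$, so $\bar\theta^a(y^*)=\gamma_k=\bar\theta^*(y^*)$; this fixes $\bar\theta^b(y^*)=\gamma_k$. Next, for each node $i$ with $\gamma_i>0$ (so $D_i<\gamma_k$, which also forces $W^*_{it}=0$) I would take the configuration $y^i$ obtained from $y^*$ by opening a facility at $i$: the cheapest route from $k$ is now $D_i$, and $\bar\theta^a(y^i)=\gamma_k-\gamma_i=D_i=\bar\theta^*(y^i)$, so $y^i$ is tight; subtracting the identity at $y^*$ yields $\alpha_i=\gamma_i$. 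For a node with $\gamma_i=0$ and $W^*_{it}=0$, the analogous configuration leaves $\bar\theta^a$, hence $\bar\theta^*$, hence $\bar\theta^b$, equal to $\gamma_k$, forcing $\alpha_i=0$; nodes that are forced open on all of $Y$ (for instance those in the pre-existing network) have $\gamma_i=0$ and contribute constants that merge into $\beta$ and need no separate treatment. Finally, for a closed arc $(i,j)$ with $\lambda_{ij}>0$ I would use the configuration obtained from $y^*$ by opening $(i,j)$ (and its reverse) together with a facility at $j$; the shortest-path inequalities used above to prove dual feasibility (in particular $\lambda_{ij}>0\Rightarrow D_i+\rho_{ijt}<\gamma_k$ and $\lambda_{ji}=0$) show this point is tight with value $D_i+\rho_{ijt}$, and combining it with the already-established values of $\gamma_i$ and $\gamma_j$ pins down $\mu_{ij}=\lambda_{ij}$; a closed arc with $\lambda_{ij}=\lambda_{ji}=0$ is handled by opening it and comparing. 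Collecting these identities gives $\bar\theta^b\equiv\bar\theta^a$ on all of $Y$, contradicting the strict inequality demanded of a dominating cut, so the Algorithm~\ref{AlgBendOptFirst} cut is Pareto-optimal.

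The step I expect to be the main obstacle is checking that the auxiliary configurations ($y^i$ and the arc-opening configurations) actually lie in $Y$: opening an extra facility or arc must respect the budget constraints \eqref{MPC8}--\eqref{MPC9} together with the construction/monotonicity constraints \eqref{MPC6}--\eqref{MPC7}, and for a tight budget this need not hold. The natural fix—implicit in the Magnanti--Wong framework—is to take $Y$ to be the set of configurations of $(\mathbf W_{\cdot t},\mathbf X_{\cdot t})$ under which sub-problem $(k,t)$ is feasible, i.e.\ has finite $\bar\theta^*$, so that the comparison ranges over all configurations that only add facilities and arcs; under that reading every configuration used above is admissible. A secondary nuisance is the bidirectional-arc coupling \eqref{OPC10}: since $X_{ijt}$ and $X_{jit}$ move together, the arc coefficients appear in linked pairs $(\mu_{ij},\mu_{ji})$, so separating them requires one additional configuration (opening the arc pair without the downstream facility) plus the observation that $\lambda_{ji}=0$ whenever $\lambda_{ij}>0$. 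Everything else is the same shortest-path algebra already used in the dual-feasibility proof.
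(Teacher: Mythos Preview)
Your approach is essentially the paper's own: assume a dominating cut, then probe with a family of master configurations (open one extra facility, open one extra arc plus the facility at its head) at which the Algorithm~\ref{AlgBendOptFirst} cut is tight, and use the Lemma to force equality of all coefficients. The shortest-path bookkeeping you describe is exactly what the paper does.

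There is one small gap relative to the paper. You pin down $\alpha_i$ for every node with $W^*_{it}=0$, but for a node with $W^*_{it}=1$ that is \emph{not} forced open on all of $Y$ you say nothing, and likewise for $\mu_{ij}$ on open arcs; ``merging into $\beta$'' only works when the variable is constant over $Y$, which under your own reading of $Y$ (all sub-problem-feasible configurations) it is not. The paper closes this gap with a single extra probe you omitted: open a facility at the source $k$ itself. The objective drops to $0$, the Algorithm~\ref{AlgBendOptFirst} cut is tight there, and since any dominating cut is a dual-feasible solution its constant term is $\alpha_k$, so the cut evaluates to $-\sum_{i\in\mathbf F^o\setminus\{k\}}\alpha_i-\sum_{(i,j)\in\mathbf L^o}\mu_{ij}=0$; nonnegativity then forces $\alpha_i=0$ for every open facility and $\mu_{ij}=0$ for every open arc in one stroke. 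With that step inserted, your argument and the paper's coincide. Your explicit remarks about budget feasibility of the probe configurations and about the bidirectional coupling \eqref{OPC10} are issues the paper simply glosses over; your proposed resolution (take $Y$ to be the set of configurations for which the sub-problem is feasible, i.e.\ the natural domain for comparing Benders cuts) is the standard one and is fine.
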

	
	\begin{proof}		
	All Benders cuts for this problem are of the form:
	\begin{equation}
	\theta_{kt} \geq \gamma_k-\sum_{i\in N}\gamma_iW_{it}-\sum_{(i,j)\in A}\lambda_{ij}X_{ijt}
	\end{equation}
	
	Let the dual variables associated with the first analytic Benders cut (i.e. the cut generated by Algorithm \ref{AlgBendOptFirst}) be $\bar{\gamma}_i$ and $\bar{\lambda}_{ij}$. In the current solution, the closest open facility is $i^*$ ($d_{i^*}=\bar{\gamma}_k$).
	
	We assume that the current cheapest facility $i^*$ is not the same as the location, i.e. $i^*\neq k$. If $i^*=k$, then the cut is trivial and not Pareto-optimal. We also assume that there exists another location $j\in N$ such that $d_j<d_{i^*}$, since otherwise the cut will again be trivial and not Pareto-optimal. We begin by defining some partitions of the nodes and arcs of the problem:
	\begin{align*}
	\textbf{F}^o = \{i|W^*_{it} = 1\},&\text{ the set of open facilities, }&\\
	\textbf{F}^c = \{i|W^*_{it} = 0\},&\text{ the set of closed facilities, }&\\
	\textbf{F}^+ = \{i|d_i \geq \bar{\gamma_k}\},&\text{ the set of facilities at equal or greater distance than $i^*$, }&\\
	\textbf{F}^- = \{i|d_i < \bar{\gamma_k}\},&\text{ the set of facilities closer than $i^*$, }&\\
	\textbf{L}^o = \{i|X^*_{ijt} = 1\},&\text{ the set of open links, and}&\\
	\textbf{L}^c = \{i|X^*_{ijt} = 0\},&\text{ the set of closed links }&
	\end{align*}
	
	Now assume there exists a Benders cut using the dual variables $\hat{\gamma}_i$ and $\hat{\lambda}_{ij}$, which dominates the first analytic Benders cut. As they are both Benders cuts, they must both equal the objective value, and thus each other, for the current solution to the master problem. If we open a facility at the source, $k$, the objective value will be zero and the first analytic cut will be tight, so the dominating cut must also equal zero for this solution. This leads to:
	\begin{align*}
	0=&\bar{\gamma}_k - \bar{\gamma}_k - \sum_{\substack{i\in N\\i\neq k}}\bar{\gamma}_iW^*_{it} - \sum_{(i,j)\in A}\bar{\lambda}_{ij}X^*_{ijt}\\
	=& 0 - \sum_{\substack{i\in \textbf{F}^o\\i\neq k}}\bar{\gamma}_i- \sum_{(i,j)\in \textbf{L}^o}\bar{\lambda}_{ij}X^*_{ijt}
	\end{align*}
	Since $\bar{\gamma}_i\geq 0 $ and $\bar{\lambda}_{ij}\geq 0$, we have that $\bar{\gamma}_i=\hat{\gamma}_i=0$ $\forall i\in \textbf{F}^o$ and $\bar{\lambda}_{ij} = \hat{\lambda}_{ij} = 0$ $\forall (i,j) \in \textbf{L}^o$. Note that $i^*\in \textbf{F}^o$, and so $\gamma_{i^*}=0$. Returning to the current solution, the two cuts must equal each other, and for both cuts, either $\gamma_i=0$ or $W_{it}=0$ $\forall i\in N$, and similarly for arcs, so we have that $\hat{\gamma}_k=\bar{\gamma}_k$.
	
	Now, for any other location $i\in N$, if we open a facility at $i$, the first analytic cut will still be tight, so the dominating cut must also be tight. Since the only changes in both cuts is $W_{it}$, we have that $\bar{\gamma}_i=\hat{\gamma}_i$ $\forall i \in N$. All that remains is to show that $\bar{\lambda}_{ij} = \hat{\lambda}_{ij}$ $\forall (i,j) \in \textbf{L}^c$.
	
	For any arc $(i,j)\in \textbf{L}^c$ where $\hat{\gamma}_i\leq \rho_{ijt}$, $\hat{\lambda}_{ij}=0$, which will be tight since even if a facility were opened at $j$, it would still be further away than the closest open facility, so $\bar{\lambda}_{ij}$ will also be zero. The other case where $\hat{\lambda}_{ij}=0$ is when $\hat{\gamma}_j > \hat{\gamma}_i - \rho_{ijt}$, that is, the arc does not create a short-cut in the network. In this case, opening the arc does not change the objective value and the first analytic cut will be tight, so $\bar{\lambda}_{ij}=0$ for all arcs where $\hat{\lambda}_{ij}=0$.
	
	If $\hat{\lambda}_{ij}>0$, then $\hat{\lambda}_{ij}=\hat{\gamma}_i-\hat{\gamma}_j-\rho_{ijt}$ and $\hat{\gamma}_i>\rho_{ijt}$. If we open the arc $(i,j)$ and the facility $j$, then the first analytic cut will be:
	\begin{align*}
	\hat{\gamma}_k-\hat{\gamma}_j-\hat{\lambda}_{ij}=&\hat{\gamma}_k-\hat{\gamma}_j-(\hat{\gamma}_i-\hat{\gamma}_j-\rho_{ijt})\\
	=& \hat{\gamma}_k - \hat{\gamma}_i + \rho_{ijt}
	\end{align*}
	which is the length of the shortest path between $k$ and $i$ plus the length of the arc from $i$ to $j$. Since $\hat{\gamma}_i > \rho_{ijt}$, this will be lower than the original path length, and thus $j$ will be closer than $i^*$ to $k$. So the first analytic cut is tight at these points, and the dominating cut must also be tight. Since $\bar{\gamma}_j=\hat{\gamma}_j$ for all $j\in N$, we have:
	\begin{align*}
	\hat{\gamma}_k - \hat{\gamma}_j-\hat{\lambda}_{ij}=&\bar{\gamma}_k - \bar{\gamma}_j-\bar{\lambda}_{ij}\\
	=&\hat{\gamma}_k - \hat{\gamma}_j-\bar{\lambda}_{ij}\\
	\hat{\lambda}_{ij}=&\bar{\lambda}_{ij}
	\end{align*} 
	
	So $\bar{\lambda}_{ij}=\hat{\lambda}_{ij}$ $\forall (i,j) \in A$, and thus the dominating cut is identical to the first analytic cut. So there are no cuts which dominate the first analytic cut, and thus it is Pareto-optimal.
	\end{proof}

	The same method can be used to prove that the second analytic Benders cut is also Pareto-optimal by selecting different points.

	\iffalse If a node can be serviced by a network configuration, then any other node that connects to it can also be serviced, by routing all demand to the first node and then following the same path. Thus, if we have a set of nodes without feasible solutions, there will be a cycle, and potentially other nodes which are only connected to the cycle. These nodes can be ignored, since if the nodes in the cycle have feasible solutions, so too will the other nodes. One can then iteratively remove nodes from the infeasible set if they do not have any other nodes in the set which connect into them. This will reduce the infeasible set further. For the remaining nodes, we add a feasibility cut of the form:
	
	\begin{equation}
	\sum_{i\in \textup{IN}} W_{i0} + \sum_{(a,b)\in \textup{IA}} X_{ab0} \geq 1
	\end{equation}
	
	\noindent where IN is the set of nodes in the remaining infeasible set and IA is the set of arcs which start in IN and do not end in IN. This ensures that either a facility is opened in the cycle, or an arc leaving the cycle is added so demand can flow elsewhere. These constraints are significantly stronger than the regular Benders feasibility cuts.
	\fi

	\subsection{Budget cover inequalities}\label{SubSecBudget}
	In addition to this, we also add inequalities on the budget variables, $U$ and $V$, to potentially tighten the relaxed problem. The budget constraints are effectively a knapsack problem, and as such we can add cover inequalities similar to those described by Gu, Nemhauser and Savelsbergh \cite{Gu1998}. After a solution to the relaxed problem is found, we check, for each time period, which facilities and arcs have been partially or wholly constructed. We sum the variables over all facilities/links and time periods up to and including the current time period, and if this is not an integer value, then some facilities or links have been partially opened. 
	
	$S$ is the sum of facilities/links that have been opened up to this point in time. We then order the facilities/links from cheapest to most expensive to open, and if the sum of opening costs of the first $\lceil S \rceil$ facilities/links is greater than the available budget, we add a new constraint of the form:
	\noindent\begin{minipage}{\textwidth}
		\begin{minipage}[c][5cm][c]{\dimexpr0.5\textwidth-0.5\Colsep\relax}
			\begin{equation}
			\sum_{t'=1}^t\sum_{i \in \bar{S}} U_{it'} \leq \lfloor S \rfloor \label{BC1}
			\end{equation}
		\end{minipage}\hfill
		\begin{minipage}[c][5cm][c]{\dimexpr0.5\textwidth-0.5\Colsep\relax}
			\begin{equation}
			\sum_{t'=1}^t\sum_{\substack{(i,j) \in \bar{S}\\i<j}} V_{ijt'} \leq \lfloor S \rfloor \label{BC2}
			\end{equation}
		\end{minipage}%
	\end{minipage}
	where $\bar{S}$ is the cheapest $\lceil S \rceil$ facilities/links. If it is impossible to open all facilities/links in $\bar{S}$, then turning one off to open another facility/link which is more expensive will not be possible either. Thus, these cuts can be lifted to include all facilities/links more expensive than the most expensive member of $\bar{S}$.

	\section{Warm start}\label{SecWarm}
	One potential problem from applying disaggregated Benders Decomposition to this model is that the initial LP bound is extremely loose. While it is expected that the LP bound will be lower for the Benders master problem because it is a relaxation of the original problem, for this model it is significantly lower. This can be overcome by using a ``warm-start''\cite{deSa2013,McDaniel1977}, which involves solving the linear relaxation of the problem and using the results to add Benders cuts to the master problem. Performing this repeatedly until the bound does not increase substantially, or no more cuts are added, significantly tightens the bound and reduces the runtime of the solver.
	
	This yields significant improvements to the runtime of the program, however it is sometimes more useful to use continuous analogues of the Pareto-optimal analytic Benders cuts in the warm start. Because of this, we analytically construct the dual variables to be used in the pre-cuts. This yields the strongest cuts possible, which can improve the solution speed of the master problem.
	
	\subsection{Feasibility of sub-problems}\label{SubSecWarmFeasibility}
	The first thing to check, as with the main algorithm, is the feasibility of the sub-problems. In the warm start, because all variables are continuous and not integer (or binary), the arcs of the network are allowed to be partially open, and likewise for facilities. As such, it is no longer enough that there be a path to an open facility, instead it must be possible to route all demand to facilities simultaneously. As with the main problem, infeasible sub-problems will only occur when a cycle exists in the network. The IIS feasibility cuts are capable of handling the relaxed problem, and as such are always used in the warm start.
	
	\subsection{Solution of the sub-problems}
	After having ensured the feasibility of the relaxed solutions to the master problem, we solve the flow sub-problems as LP's and extract the paths from these results. When more than one path is required, it is because partially opened arcs or facilities are restricting the flow of demand. In most cases, the longest path will not have any of these restricting factors. 
	
	\iffalse In the case that the longest path does have restricting factors, it is because there was a shorter path which did not. This occurs when there is a subgraph of the network which contains the source node, cannot service all the demand, and there is a path out of the subgraph which is shorter than the longest path inside the subgraph. One of the paths must use less than the capacity of an arc so there is room for this other path to flow out of the subgraph, maintaining feasibility of the solution. \fi 
	
	If there are $n$ paths in the solution, there will be at least $n-1$ restricting factors. These restricting factors, denoted by the set $\mathcal{C}$, correspond to potential non-zero values for $\gamma$ or $\lambda$ dual variables, and as such there are several constraints on these values. The first is that the sum of the dual variables corresponding to the arcs and final facility of each path must equal the saving from travelling along the path. That is, given a path $p$ of length $L_p$ which ends at node $\textup{dest}_p$, and the set of arcs on that path $A_p$: 
	\begin{equation}
	\gamma_k - L_p = \gamma_{\textup{dest}_p} + \sum_{(a,b)\in A_p}\lambda_{ab} \label{RCP}
	\end{equation}
	
	This equation ensures the reduced cost of each path is zero. In most cases, $\gamma_k$ will be the length of the longest path (opening a facility allows demand from the longest path to be serviced at the source), however in the case where the longest path has restricting factors, the RHS of the above equation will be non-zero for the longest path and $\gamma_k$ will be greater than $L_p$. %This is because opening demand at the source potentially results in two savings: one from the longest path being served at the source node, and another from the demand previously being routed out of the subgraph instead being serviced inside the subgraph.
	
	Another condition is that the value of the Benders cut must be equal to the objective value of the sub-problem for the current master problem solution. This is necessary for the dual variables to form a dual-optimal solution. The final condition is that the reduced cost of each arc is non-negative. If a solution is found which satisfies these three conditions, then it is dual optimal, and the dual variables can be used to construct a Benders cut.
	
	If there are $n$ paths and $n-1$ restricting factors, then these dual variables can be calculated directly by solving equation (\ref{RCP}) for all paths simultaneously. In this case, the matrix will be non-singular and thus the values of the dual variables for all restricting factors can be determined. However, as there is much degeneracy in network flow problems, often there will be more than $n-1$ restricting factors for $n$ paths. This occurs when a path has two or more restricting factors which lie only on that path. In this case, one can either determine which $n-1$ factors to use by eliminating any ``extra'' factors, or one can solve the following linear program:
	\begin{equation}
	\text {Minimise } \gamma_k
	\end{equation}
	Subject to:
	\begin{align}
	L_p - \gamma_k + \gamma_{\textup{dest}_p} + \sum_{\substack{(a,b)\in \mathcal{C}\\(a,b)\in A_p}}\lambda_{ab}=0&&\forall p \in P\\
	\gamma_k - \sum_{i\in N}\gamma_i \bar{W}_{it}-\sum_{(i,j)\in A}\lambda_{ij}\bar{X}_{ijt} = \sum_{(i,j)\in A} \rho_{ijt}d_{kt}\bar{Z}_{ijkt}\\	
	\gamma_j-\gamma_i+\lambda_{ij}+\rho_{ijt}d_{kt} \geq 0 && \forall (i,j) \in A
	\end{align}
	
	The effect of constructing analytic warm start Benders cuts this way can be seen in the Results section.

	\section{Results}\label{SecResults}
	We are comparing three different formulations using the public data set from Ghaderi and Jabalameli \cite{Ghaderi2013}. The tests are performed on a PC running Windows 8.1 with an Intel Core i7-3770 quad-core at 3.40GHz and 16GB of RAM. The implementations are written in Python 2.7 and use the Gurobi 6.5 \cite{gurobi} optimisation package. All software used is 64-bit. The maximum runtime for each instance is $50|N||T|$ seconds, where $|N|$ is the number of nodes and $|T|$ is the number of time periods, which is consistent with Ghaderi and Jabalameli \cite{Ghaderi2013}. 
	
	All instances are grouped in threes, where each instance in a group is on the same network, tested over five, 10 and 20 time periods. Each instance has two cases: one where a network already exists, and one where it must be created from scratch. Table \ref{TabInstanceStats} shows the number of nodes, links and time periods of each instance, which can be used to calculate the runtime of each instance.
	
	We start with the straightforward MIP implementation with no improvements. We then compare it to two different implementations of disaggregated Benders decomposition: the first (DBD) is the standard implementation of disaggregated Benders decomposition with regular feasibility cuts, the second (Accelerated DBD) includes the initial time feasibility with IIS feasibility cuts. Both implementations use warm starts without budget cover constraints. Finally, we compare the different possible cuts for the warm start and callback, and evaluate the effectiveness of the budget cover constraints.
	
	Tables \ref{TabEComp1} and \ref{TabNComp1} show the differences between the MIP, DBD and Accelerated DBD implementations on the existing and new network cases respectively. For the majority of cases, Benders decomposition is better than the straightforward MIP, especially on larger networks. There is only a minor benefit to Accelerated DBD over DBD for the existing networks, however on the new network cases Accelerated DBD is a clear winner, with DBD performing worse than the MIP. 
	
	The main reason for the difficulty of the new network cases for the DBD implementation is feasibility of the networks. In the existing network case, the networks are either already feasible or can be made feasible very easily, whereas for the new network case a brand new feasible network must be made from scratch. The standard Benders feasibility cuts used by the DBD implementation are not tight enough to help the algorithm converge, and as such none of the instances are solved to optimality in time, with many failing to find a feasible solution at all. By contrast, the IIS feasibility constraints used in Accelerated DBD allow the problems to be solved significantly faster than the MIP, and solutions were found to previously unsolved instances.

	\begin{table}
		\centering
		\caption{Problem sizes for Ghaderi and Jabalameli instances \cite{Ghaderi2013}}
		\label{TabInstanceStats}
		\begin{tabular*}{0.9\textwidth}{@{\extracolsep{\fill}}|c|ccc||c|ccc||c|ccc|}
			\hline Inst. & N & L & T & Inst. & N & L & T & Inst. & N & L & T\\ \hline 
			TP1 & 20 & 46 & 5 & TP10 & 40 & 162 & 5 & TP19 & 80 & 171 & 5\\  
			TP2 & 20 & 46 & 10 & TP11 & 40 & 162 & 10 & TP20 & 80 & 171 & 10\\ 
			TP3 & 20 & 46 & 20 & TP12 & 40 & 162 & 20 & TP21 & 80 & 171 & 20\\  
			TP4 & 20 & 61 & 5 & TP13 & 60 & 180 & 5 & TP22 & 80 & 280 & 5\\ 
			TP5 & 20 & 61 & 10 & TP14 & 60 & 180 & 10 & TP23 & 80 & 280 & 10\\ 
			TP6 & 20 & 61 & 20 & TP15 & 60 & 180 & 20 & TP24 & 80 & 280 & 20\\ 
			TP7 & 40 & 137 & 5 & TP16 & 60 & 205 & 5 & TP25 & 56 & 200 & 5\\ 
			TP8 & 40 & 137 & 10 & TP17 & 60 & 205 & 10 & TP26 & 56 & 200 & 10\\ 
			TP9 & 40 & 137 & 20 & TP18 & 60 & 205 & 20 & TP27 & 56 & 200 & 20\\ 
			\hline
		\end{tabular*}
	\end{table}
	
\begin{sidewaystable}
	\centering\small
	\caption{Comparison of runtimes and objective values between the CPLEX implementation in Ghaderi and Jabalameli (2013) and our MIP, standard DBD and accelerated DBD on existing network instances. Runs which reached the time limit are reported as Time (gap\%).}
	\label{TabEComp1}
	\begin{tabular*}{\textwidth}{@{\extracolsep{\fill}}|c||r|rrrr|rrr|}
		\hline \multirow{2}{*}{Instance} & \multirow{2}{*}{Objective} & \multicolumn{4}{c|}{Ghaderi et. al. (2013) CPLEX} & \multirow{2}{*}{MIP time (s)} & \multirow{2}{*}{DBD time (s)} & Accelerated\\
		& & Objective & Lower Bound & Gap (\%) & Time (s) & & & DBD time (s) \\ \hline
		TP1E & 14924.87 & 14924.87 & 14924.87 & 0.00 & 4 & 1.24 & 1.87 & 0.87\\  
		TP2E & 29698.23 & 29698.23 & 29698.23 & 0.00 & 14 & 3.00 & 4.26 & 2.12\\ 
		TP3E & 67676.54 & 67676.54 & 67676.54 & 0.00 & 81 & 15.56 & 12.27 & 6.05\\  
		TP4E & 31419.98 & 31419.98 & 31419.98 & 0.00 & 5 & 1.62 & 2.35 & 1.07\\ 
		TP5E & 67884.16 & 67884.16 & 67884.16 & 0.00 & 17 & 5.30 & 5.80 & 1.96\\ 
		TP6E & 158091.31 & 158091.31 & 158091.31 & 0.00 & 77 & 13.41 & 13.00 & 6.00\\ 
		TP7E & 21159.56 & 21159.56 & 21159.56 & 0.00 & 63 & 7.24 & 12.55 & 8.92\\ 
		TP8E & 45581.23 & 45581.23 & 45581.23 & 0.00 & 1758 & 75.48 & 33.45 & 24.22\\ 
		TP9E & 111643.23 & 111643.23 & 110830.16 & 0.73 & 40000 & 253.72 & 95.71 & 62.59\\ 
		TP10E & 19232.89 & 19232.89 & 19232.89 & 0.00 & 289 & 44.15 & 20.82 & 11.92\\ 
		TP11E & 40310.81 & 40310.81 & 39730.11 & 1.44 & 20000 & 664.06 & 250.15 & 43.04\\ 
		TP12E & 99032.15 & 99103.48 & 96797.34 & 2.33 & 40000 & 1456.10 & 501.12 & 95.33\\  
		TP13E & 25110.09 & 25110.09 & 25110.09 & 0.00 & 347 & 31.66 & 28.67 & 19.40\\  
		TP14E & 56892.74 & 56892.74 & 56892.74 & 0.00 & 1479 & 89.81 & 69.40 & 40.39\\  
		TP15E & 151049.50 & 151115.62 & 149518.50 & 1.06 & 60000 & 2778.78 & 356.96 & 324.78\\  
		TP16E & 26973.29 & 26973.29 & 26973.29 & 0.00 & 480 & 36.13 & 40.98 & 24.65\\ 
		TP17E & 58636.16 & 58636.16 & 58636.16 & 0.00 & 16514 & 273.39 & 117.04 & 73.78\\ 
		TP18E & 143986.33 & 144200.68 & 142289.95 & 1.33 & 60000 & 972.10 & 435.64 & 467.36\\  
		TP19E & 33696.53 & 33696.53 & 33696.53 & 0.00 & 5622 & 74.40 & 45.38 & 40.70\\  
		TP20E & 73969.28 & 74677.35 & 73256.40 & 1.90 & 40000 &  481.31 & 156.72 & 132.74\\  
		TP21E & 196674.19 & 198074.52 & 192701.54 & 2.71 & 80000 & 16678.98 & 5884.91 & 4239.84\\  
		TP22E & 30999.44 & 30999.44 & 30999.44 & 0.00 & 1179 & 59.19 & 71.87 & 44.45\\  
		TP23E & 70070.24 & 70092.80 & 69623.72 & 0.67 & 40000 & 622.60 & 235.13 & 156.19\\  
		TP24E & 182700.42 & 183511.97 & 179939.71 & 1.95 & 80000 & 9887.72 & 4536.41 & 2235.90\\ \hline
	\end{tabular*}
\end{sidewaystable}

\begin{sidewaystable}
	\centering\small
	\caption{Comparison of runtimes and optimality gaps between the CPLEX and two heuristic implementations in Ghaderi and Jabalameli (2013) and our MIP, standard DBD and accelerated DBD on new network instances. Runs which reached the time limit are reported as Time (gap\%). If no valid solution was found, they are reported as Time (NS).}
	\label{TabNComp1}
	\begin{tabular*}{\textwidth}{@{\extracolsep{\fill}}|c||r|rrr|rrr|}
		\hline Instance & Best Objective & \multicolumn{3}{c|}{Ghaderi et. al. (2013) Times (s)} & \multirow{2}{*}{MIP Time (s)} & \multirow{2}{*}{DBD Time (s)} & Accelerated\\
		& (Best Bound) & CPLEX & Heuristic & Hybrid SA & & & DBD Time (s)\\ \hline
		TP1N & 20473.40 & Time (7.02\%) & 427 (0.00\%) & 427 (0.00\%) & 283.05 & Time (5.1\%) & 4.13\\ 
		TP2N & 35528.03 & Time (NS) & 371 (0.59\%) & 397 (0.00\%) & 557.54 & Time (1.4\%) & 11.22\\ 
		TP3N & 70973.67 & Time (NS) & 604 (1.10\%) & 645 (0.00\%) & 8619.04 & Time (NS) & 28.22\\ 
		TP4N & 35614.49 & Time (10.66\%) & 1289 (2.23\%) & 1455 (0.00\%) & 2891.17 & Time (9.5\%) & 5.42\\ 
		TP5N & 69150.51 & Time (18.14\%) & 1208 (1.50\%) & 1534 (0.62\%) & Time (9.2\%) & Time (11.2\%) & 11.00\\
		TP6N & 151120.93 & Time (NS) & 1869 (0.63\%) & 4497 (0.28\%) & Time (6.5\%) & Time (13.1\%) & 25.14\\ 
		TP7N & 23120.55 & Time (9.06\%) & 519 (0.00\%) & 519 (0.00\%) & 6500.70 & Time (0.6\%) & 21.06\\ 
		TP8N & 46256.15 & Time (NS) & 2296 (0.00\%) & 2296 (0.00\%) & Time (2.5\%) & Time (NS) & 761.75\\ 
		TP9N & 105113.30 & Time (NS) & 3238 (0.07\%) & 6501 (0.05\%) & Time (2.3\%) & Time (NS) & 916.12\\ 
		TP10N & 19504.02 & Time (NS) & 396 (4.63\%) & 2834 (2.31\%) & 3801.03 & Time (1.6\%) & 59.80\\ 
		TP11N & 37316.52 & Time (8.79\%) & 1822 (4.03\%) & 2605 (0.26\%) & 10577.88 & Time (NS) & 140.99\\ 
		TP12N & 89331.29 & Time (2.68\%) & 2521 (3.16\%) & 9828 (1.17\%) & 10740.89 & Time (NS) & 1417.29\\ 
		TP13N & 28827.77 & Time (9.42\%) & 7428 (0.14\%) & 1758 (0.39\%) & 14074.88 & Time (NS) & 310.84\\ 
		TP14N & 59320.43 & Time (NS) & 14627 (2.34\%) & 26262 (1.28\%) & Time (1.2\%) & Time (NS) & 967.67\\ 
		TP15N & 145023.30 & Time (NS) & Time (3.31\%) & 15819 (1.28\%) & Time (0.4\%) & Time (NS) & 7798.54\\ 
		TP16N & 27741.57 & Time (NS) & 7782 (2.55\%) & 8252 (0.55\%) & Time (2.6\%) & Time (NS) & 572.80\\  
		TP17N & 55740.52 & Time (NS) & 17136 (2.05\%) & 21361 (1.19\%) & Time (1.5\%) & Time (NS) & 2587.47\\  
		TP18N & 132520.16 & Time (6.25\%) & Time (2.29\%) & 13660 (1.12\%) & 39282.31 & Time (NS) & 2753.99\\ 
		TP19N & 37562.93 & Time (19.22\%) & 10805 (0.71\%) & 19255 (0.04\%) & Time (2.8\%) & Time (NS) & 1427.63\\  
		TP20N & 76063.44 & Time (NS) & Time (0.22\%) & 15477 (0.37\%) & Time (1.8\%) & Time (NS) & 13749.00\\ 
		TP21N & $\substack{183881.92\\ (183555.16)}$ & Time (NS) & Time(2.15\%) & 24659 (1.16\%) &  Time (3.5\%) & Time (NS) & Time (0.2\%)\\ 
		TP22N & 33258.34 & Time (6.58\%) & 8332 (2.06\%) & 12874 (0.73\%) & Time (0.2\%) & Time (NS) & 454.37\\  
		TP23N & 70208.93 & Time (7.84\%) & Time (2.09\%) & 31868 (1.25\%) & Time (0.8\%) & Time (NS) & 3177.27\\  
		TP24N & $\substack{171442.41\\ (171036.86)}$ & Time (NS) & Time (2.48\%) & 60596 (1.65\%) & Time (0.8\%) & Time (NS) & Time (0.2\%)\\ \hline
	\end{tabular*}
\end{sidewaystable}

%DISCUSS TABLE RESULTS HERE

There are three other improvements to the Benders decomposition algorithm we consider here: using analytic Benders cuts (one or two) in the warm start, the budget cover constraints, and using analytic Benders cuts (one or two) during the branch and cut process. We consider the existing and new network cases separately, as the algorithm behaves differently on instances in different cases. After each comparison, we take the best result and keep it for the next comparison.

The main statistic we report is the run time (or achieved optimality gap). We also include the mean and geometric mean of the times for each method across all instances in each case, as well as how many instances each method performed best upon. These help to determine whether or not each method is useful. The first choice we examine is the type of Benders cuts used in the warm start.

\subsection{Warm start}
\begin{sidewaystable}
	\centering\small
	\caption{Comparison of the algorithm's performance when using different cuts in the warm start for existing and new network instances. Non-analytic (NonA), one analytic cut (AnOne) and two analytic cuts (AnTwo). All times are in seconds, instances that are not solved to optimality are marked with Time (optimality gap). Also reported are the Mean time, Geometric Mean (G.M.) time and the number of times each algorithm was best in the instance set.}
	\label{WSCompEN}
	\begin{tabular*}{\textwidth}{@{\extracolsep{\fill}}|c||rrr|rrr|}
		\hline Instance & \multicolumn{3}{c|}{Existing network instances} & \multicolumn{3}{c|}{New network instances}\\
		Number & NonA & AnOne & AnTwo & NonA & AnOne & AnTwo\\ \hline
		TP1 & 0.83 & 0.98 & 0.70& 3.78 & 5.27 & 5.69\\ 
		TP2 & 2.00 & 1.94 & 1.67 & 9.03 & 12.13 & 12.02\\  
		TP3 & 5.84 & 5.26 & 4.49 & 24.76 & 24.82 & 23.83\\  
		TP4 & 0.88 & 0.88 & 0.97 & 3.63 & 8.52 & 9.47\\ 
		TP5 & 1.80 & 2.06 & 2.05 & 7.95 & 12.92 & 16.30\\ 
		TP6 & 5.25 & 4.45 & 4.14 & 16.76 & 28.35 & 29.18\\  
		TP7 & 7.75 & 7.97 & 5.83 & 24.26 & 42.95 & 35.22\\  
		TP8 & 20.82 & 17.03 & 18.75 & 290.13 & 635.40 & 353.16\\
		TP9 & 61.00 & 44.33 & 35.22 & 2136.79 & 1951.36 & 2664.50\\ 
		TP10 & 10.44 & 13.94 & 15.19 & 69.83 & 398.01 & 117.59\\  
		TP11 & 112.67 & 96.11 & 43.70 & 145.50 & 350.51 & 4912.77\\  
		TP12 & 89.47 & 136.03 & 113.27 & 1361.18 & 2130.17 & 1922.54\\
		TP13 & 19.44 & 14.09 & 10.16 & 295.77 & 919.66 & 400.17\\ 
		TP14 & 34.27 & 25.10 & 21.16 & 2085.27 & 735.11 & 1839.12\\ 
		TP15 & 394.73 & 175.58 & 154.17 & 6754.99 & 29861.33 & Time (0.4\%)\\ 
		TP16 & 22.71 & 29.31 & 17.66 & 568.55 & 14900.21 & 699.46\\ 
		TP17 & 66.94 & 63.82 & 43.07 & 1952.92 & 2097.34 & 3922.23\\
		TP18 & 337.36 & 212.04 & 204.33 & 2547.31 & 4818.17 & 6516.81\\ 
		TP19 & 34.16 & 37.84 & 25.66 & 975.16 & 2227.87 & 2512.98\\ 
		TP20 & 243.60 & 210.42 & 108.99 & 9748.52 & Time (1.3\%) & 15566.78\\ 
		TP21 & 4269.29 & 7600.51 & 1435.95 & Time (0.5\%) & Time (8.6\%) & Time (1.5\%)\\ 
		TP22 & 45.06 & 41.09 & 18.35 & 226.32 & 406.74 & 1289.39\\  
		TP23 & 184.25 & 91.47 & 54.17 & 2112.00 & 5488.23 & Time (1.08\%)\\ 
		TP24 & 5875.25 & 827.90 & 440.33 & Time (0.2\%) & Time (0.8\%) & Time (0.4\%)\\ \hline 
		Mean time & 493.57 & 402.51 & 115.83& 7965.48 & 11115.54 & 12602.53\\
		G.M. time & 35.79 & 30.47 & 21.66 & 363.89 & 713.38 & 782.02\\
		Wins & 3 & 1 & 19 & 21 & 2 & 1\\ \hline
	\end{tabular*}
\end{sidewaystable}
The warm start is implemented in the same way for all cases: solve the LP relaxation of the master problem, solve the sub-problems for the resulting values of the relaxed variables and check whether or not a Benders optimality or feasibility cut must be added. If so, add the necessary cuts. This is repeated until the objective value converges or no new cuts are added. All feasibility cuts used are the IIS cuts discussed in Section \ref{SubSecIIS}. There are three options for Benders optimality cuts: non-analytic cuts (NonA) which use the dual variables returned by Gurobi to construct the cut, one analytic cut (AnOne) which is always the $\lambda$-heavy cut, or both analytic cuts (AnTwo), which adds both the $\lambda$-heavy and $\gamma$-heavy cuts if possible. The choice of using the $\lambda$-heavy cut over the $\gamma$-heavy cut for the AnOne implementation was arbitrary, as the problem can be solved using either type of cut and there was no clear-cut winner from our initial experimentation.

Table \ref{WSCompEN} contains the results for the existing network case, where it is clear that using both analytic cuts in the warm start is highly beneficial. The mean and geometric means for AnTwo are significantly lower than the other two methods, and it won in 19 of 24 cases. All cases where it was slower are smaller instances which run in under 100 seconds. For many of the larger instances, it makes a significant improvement.

For the new network case, Table \ref{WSCompEN} shows that using analytic cuts in the warm start becomes a hindrance, with non-analytic cuts winning 21 of 24 times. In one instance, NonA was only 0.93s slower than the best method, and the other two instances where it lost, it is not significantly slower. The mean and geometric mean for NonA are significantly lower than those for AnOne and AnTwo.

For the next comparison, the existing network cases will be run using a warm start with two analytic cuts, and the new network cases with a warm start with non-analytic cuts.

\subsection{Budget cover constraints}

\begin{table}
	\caption{Comparison of algorithm's performance for existing and new network instances, with and without budget cover constraints.}
	\label{BBComp}
	\begin{tabular*}{\textwidth}{@{\extracolsep{\fill}}|c|cc||c|cc|}
		\hline Existing & Without & With & New & Without & With \\ \hline
		TP1E & 0.70 & 0.70 & TP1N & 3.78 & 2.55\\ 
		TP2E & 1.67 & 2.26 & TP2N & 9.03 & 7.72\\ 
		TP3E & 4.49 & 5.38 & TP3N & 24.76 & 20.20\\ 
		TP4E & 0.97 & 0.86 & TP4N & 3.63 & 4.66\\ 
		TP5E & 2.05 & 1.93 & TP5N & 7.95 & 10.79\\ 
		TP6E & 4.14 & 4.63 & TP6N & 16.76 & 17.77\\ 
		TP7E & 5.83 & 5.98 & TP7N & 24.26 & 23.55\\ 
		TP8E & 18.75 & 16.30 & TP8N & 290.13 & 203.77\\ 
		TP9E & 35.22 & 36.05 & TP9N & 2136.79 & 1031.04\\ 
		TP10E & 15.19 & 13.61 & TP10N & 69.83 & 56.38\\ 
		TP11E & 43.70 & 42.65 & TP11N & 145.50 & 196.34\\ 
		TP12E & 113.27 & 97.32 & TP12N & 1361.18 & 1180.41\\ 
		TP13E & 10.16 & 11.16 & TP13N & 295.77 & 321.50\\ 
		TP14E & 21.16 & 25.57 & TP14N & 2085.27 & 2311.46\\ 
		TP15E & 154.17 & 133.59 & TP15N & 6754.99 & 8590.36\\ 
		TP16E & 17.66 & 18.90 & TP16N & 568.55 & 515.83\\ 
		TP17E & 43.07 & 47.96 & TP17N & 1952.92 & 1504.40\\ 
		TP18E & 204.33 & 199.74 & TP18N & 2547.31 & 2312.52\\ 
		TP19E & 25.66 & 26.31 & TP19N & 975.16 & 1578.65\\ 
		TP20E & 108.99 & 73.80 & TP20N & 9748.52 & 8750.23\\ 
		TP21E & 1435.95 & 1610.16 & TP21N & Time (0.5\%) & Time (0.4\%)\\ 
		TP22E & 18.35 & 21.03 & TP22N & 226.32 & 221.79\\ 
		TP23E & 54.17 & 65.08 & TP23N & 2112.00 & 1399.53\\ 
		TP24E & 440.33 & 383.10 & TP24N & Time (0.2\%) & Time (0.2\%)\\ \hline 
		Mean & 115.83 & 118.50 & & 7965.48 & 7919.86 \\
		Geometric Mean & 21.66 & 21.89 & & 363.89 & 343.35 \\
		Wins & 13 & 10 & & 9 & 15 \\ \hline
	\end{tabular*}
\end{table}

The usefulness of the budget cover constraints is more difficult to determine as they make significantly less difference than the choice of warm start optimality cuts, as can be seen in Table \ref{BBComp}. For the existing network case, the means and geometric means of the run times are almost identical, with a roughly 50-50 split on the number of wins. For the new network case, they yield a slight improvement, with lower mean and geometric means of the run times, and winning over 60\% of the time. 

The difference between the existing network and new network cases is that in the new network case the network must be built from scratch and the budget is twice as big, so there are many more arcs and facilities which will be fractionally open in the warm start solutions. For the existing network case, many arcs and facilities are already open, and so there will be fewer fractional variables in the warm start solution, meaning fewer budget cover constraints will be added.

For the next test, the existing network instances will be run with two analytic cuts and no budget cover constraints in the warm start, and the new network instances with non-analytic cuts and with budget cover constraints.

\subsection{Benders optimality cuts}

\begin{sidewaystable}
	\centering\small
	\caption{Comparison of different Benders optimality cuts for the existing network instances. All implementations use the same code (two-cut analytic warm start and no budget cover constraints) with the exception of the callback used to implement Benders optimality cuts. The different callbacks use non-analytic duals (CNonA), one analytic cut (CAnOne) and two analytic cuts (CAnTwo). All times are in seconds, instances that are not solved to optimality are marked with Time (optimality gap). Also reported are the Mean time, Geometric Mean (G.M.) time and the number of times each algorithm was best in the instance set.}
	\label{CBCompEN}
	\begin{tabular*}{\textwidth}{@{\extracolsep{\fill}}|c||rrr|rrr|}
		\hline Instance & \multicolumn{3}{c|}{Existing network instances} & \multicolumn{3}{c|}{New network instances}\\
		Number & CNonA & CAnOne & CAnTwo & CNonA & CAnOne & CAnTwo\\ \hline
		TP1 & 0.70 & 0.65 & 0.76 & 2.55 & 2.98 & 2.89\\ 
		TP2 & 1.67 & 1.84 & 1.39 & 7.72 & 7.09 & 7.86\\ 
		TP3 & 4.49 & 4.32 & 4.46 & 20.20 & 22.43 & 20.60\\  
		TP4 & 0.97 & 1.28 & 0.85 & 4.66 & 3.69 & 3.22\\  
		TP5 & 2.05 & 2.03 & 2.16 & 10.79 & 8.99 & 10.54\\ 
		TP6 & 4.14 & 3.86 & 4.27 & 17.77 & 16.11 & 16.44\\  
		TP7 & 5.83 & 5.67 & 5.47 & 23.55 & 26.18 & 22.68\\ 
		TP8 & 18.75 & 14.43 & 15.00 & 203.77 & 236.09 & 274.26\\  
		TP9 & 35.22 & 26.65 & 26.41 & 1031.04 & 710.72 & 1683.92\\  
		TP10 & 15.19 & 14.26 & 14.19 & 56.38 & 50.86 & 187.88\\
		TP11 & 43.70 & 37.17 & 37.35 & 196.34 & 198.83 & 197.57\\
		TP12 & 113.27 & 155.75 & 122.36 & 1180.41 & 810.04 & 958.81\\  
		TP13 & 10.16 & 6.78 & 6.74 & 321.50 & 511.30 & 467.62\\  
		TP14 & 21.16 & 15.55 & 15.27 & 2311.46 & 1362.79 & 987.09\\  
		TP15 & 154.17 & 128.18 & 114.79 & 8590.36 & 16371.58 & 16220.59\\ 
		TP16 & 17.66 & 15.22 & 13.60 & 515.83 & 439.66 & 485.31\\  
		TP17 & 43.07 & 36.54 & 34.15 & 1504.40 & 1254.36 & 1315.31\\ 
		TP18 & 204.33 & 140.17 & 163.00 & 2312.52 & 3934.44 & 2962.72\\ 
		TP19 & 25.66 & 17.20 & 17.78 & 1578.65 & 4851.20 & 2123.34\\ 
		TP20 & 108.99 & 45.61 & 49.23 & 8750.23 & 25858.42 & 7123.92\\ 
		TP21 & 1435.95 & 1243.31 & 1568.76 & Time (0.4\%) & Time (0.2\%) & Time (0.3\%)\\ 
		TP22 & 18.35 & 11.73 & 12.04 & 221.79 & 455.68 & 286.42\\  
		TP23 & 54.17 & 36.32 & 31.69 & 1399.53 & 973.76 & 1840.23\\  
		TP24 & 440.33 & 373.03 & 515.29 & Time(0.2\%) & Time (0.2\%) & Time (0.2\%)\\ \hline
		Mean time & 115.83 & 97.40 & 115.71 & 7919.86 & 9080.08 & 8208.89 \\
		G.M. time & 21.66 & 18.10 & 17.95 & 343.35 & 379.61 & 377.50 \\
		Wins & 1 & 12 & 11 & 9 & 10 & 4\\ \hline
	\end{tabular*}
\end{sidewaystable}

The last choice to make is the type of Benders optimality cuts to use in the main callback. These are the cuts that will be added during the branch and bound process to solve the problem to optimality. Again, the options are the non-analytic dual variables returned by Gurobi (CNonA), or the analytically constructed dual variables. For the one cut case (CAnOne), the $\lambda$-heavy cut will be used for the same reason as the warm start.

For the existing network case (seen in Table \ref{CBCompEN}), analytic cuts are clearly beneficial, with CNonA only winning on one of the 24 instances (TP12E). The difference between analytic and non-analytic run times was also clear, with non-analytic having a higher geometric mean than the analytic methods. As for whether to use one or two analytic cuts, it seems to make little difference with very similar run times and an even split of the wins. CAnOne won all but one of instances 18-24 (it lost to CAnTwo on TP23E by 5 seconds), however it was often very close.

Table \ref{CBCompEN} shows that for the new network case, using analytic cuts was again less beneficial, with CAnOne having a lower mean and geometric mean of the run times. CAnTwo only won a few times, with the rest of the instances being split between CNonA and CAnOne. For instances 18 and above, CNonA beat CAnOne in all but two cases (TP21N and TP23N) and tied in one (TP24N).

\section{Discussion}\label{SecDiscussion}
	
\subsection{Disaggregation level}\label{SubSecDisaggLevel}
In this problem it is possible to disaggregate over two different sets: the source of each demand and each separate time period. We show here that it is best to disaggregate by both sets at the same time. Disaggregation of sub-problems, and thus Benders cuts, always results in tighter bounds. These tighter bounds allow the master problem to be solved more quickly. The trade-off is that having more sub-problems can take longer to solve, particularly if there are overheads associated with those sub-problems. In this problem, the most sub-problems we solve are 1600, which is acceptable considering the speed increases we obtain from this. In other problems, the number of sub-problems may enter the hundreds of thousands, at which point even the smallest overheads will start to add up.
	
A specific example is data set TP9E, which we can compare results for if we disaggregate only by nodes, only by time and by both nodes and time. For this instance there are 40 nodes and 20 time periods. Table \ref{TabDisaggregation} shows the number of sub-problems (S.P.'s) solved, the total time spent solving sub-problems and the total time spent solving the entire problem for the different levels of disaggregation.
	
	\begin{table}
		\caption{Comparison of solution times for problem TP9E with different levels of disaggregation}\label{TabDisaggregation}
		\begin{tabular}{|c|ccc|}
			\hline Disaggregation level & \# S.P's solved & S.P. cumulative time (s) & Master solve time (s) \\ \hline
			Time only & 580 & 24.74 & 283.35\\
			Node only & 760 & 14.72 & 232.77\\
			Node and Time & 11200 & 9.48 & 101.47\\ \hline
		\end{tabular}
	\end{table}
	
We can see that disaggregating more leads to smaller sub-problems which solve significantly faster. The average solve time for each sub-problem is 43ms, 19ms and 0.85ms for time only, nodes only and both, respectively. Even though many more sub-problems must be solved when disaggregating by both nodes and time, the cumulative time spent solving them is less, and the tighter cuts provided by disaggregation leads to a faster solve time of the master problem.
	
\section{Conclusion}\label{SecConclusion}
Disaggregated Benders decomposition with lazy constraints is an effective method for solving the DUFLNDP if implemented properly. Adding constraints that enforce feasibility to avoid relying upon Benders feasibility cuts, and using a warm start are good ways of improving the effectiveness of the solver. Analytically derived Pareto-optimal Benders cuts can also be beneficial in some cases. For this particular problem, it is the disaggregation of the sub-problems and IIS feasibility cuts which provide the impressive speed increase, which has allowed us to solve almost all instances to optimality within the time limits. In the future, we would like to generalise this approach to a wide range of network design and facility location problems where similar techniques are beneficial.

	\section{References}
	\bibliography{DUFLNDP}

\end{document}